\documentclass[fleqn,reqno,11pt,a4paper,final]{amsart}

\usepackage[a4paper,left=20mm,right=20mm,top=20mm,bottom=20mm,marginpar=20mm]{geometry}
\usepackage{amsmath}
\usepackage{amssymb}
\usepackage{amsthm}
\usepackage{mathtools}
\usepackage{amscd}
\usepackage[ansinew]{inputenc}
\usepackage[noadjust]{cite}
\usepackage{bbm}
\usepackage{color}
\usepackage[english=american]{csquotes}
\usepackage[final]{graphicx}
\usepackage{hyperref}
\usepackage{calc}
\usepackage{mathptmx}
\usepackage{tikz}
\usepackage{graphicx}
\usepackage{stix}

\linespread{1.1}

\graphicspath{{../Pictures/}}

\numberwithin{equation}{section}

\newtheoremstyle{thmlemcorr}{10pt}{10pt}{\itshape}{}{\bfseries}{.}{10pt}{{\thmname{#1}\thmnumber{ #2}\thmnote{ (#3)}}}
\newtheoremstyle{thmlemcorr*}{10pt}{10pt}{\itshape}{}{\bfseries}{.}\newline{{\thmname{#1}\thmnumber{ #2}\thmnote{ (#3)}}}
\newtheoremstyle{remexample}{10pt}{10pt}{}{}{\bfseries}{.}{10pt}{{\thmname{#1}\thmnumber{ #2}\thmnote{ (#3)}}}
\newtheoremstyle{ass}{10pt}{10pt}{}{}{\bfseries}{.}{10pt}{{\thmname{#1}\thmnumber{ A#2}\thmnote{ (#3)}}}

\theoremstyle{thmlemcorr}
\newtheorem{theorem}{Theorem}
\numberwithin{theorem}{section}
\newtheorem{lemma}[theorem]{Lemma}
\newtheorem{corollary}[theorem]{Corollary}
\newtheorem{proposition}[theorem]{Proposition}

\theoremstyle{thmlemcorr*}
\newtheorem{theorem*}{Theorem}
\newtheorem{lemma*}[theorem]{Lemma}
\newtheorem{corollary*}[theorem]{Corollary}
\newtheorem{proposition*}[theorem]{Proposition}
\newtheorem{problem*}[theorem]{Problem}
\newtheorem{conjecture*}[theorem]{Conjecture}
\newtheorem{definition*}[theorem]{Definition}
\newtheorem{assumption*}[theorem]{Assumption}

\theoremstyle{remexample}
\newtheorem{remark}[theorem]{Remark}

\theoremstyle{ass}


\newcommand{\Acal}{\mathcal{A}}

\newcommand{\Fcal}{\mathcal{F}}

\newcommand{\Hcal}{\mathcal{H}}

\newcommand{\Lcal}{\mathcal{L}}

\newcommand{\N}{\mathbb{N}}
\newcommand{\R}{\mathbb{R}}

\newcommand{\eps}{\epsilon}





\def\XXint#1#2#3{{\setbox0=\hbox{$#1{#2#3}{\int}$}
\vcenter{\hbox{$#2#3$}}\kern-.5\wd0}}


\renewcommand{\eps}{\varepsilon}
\renewcommand{\epsilon}{\varepsilon}
\renewcommand{\phi}{\varphi}




\begin{document}


\title[]{Local strong solutions to a quasilinear degenerate fourth-order thin-film equation}

\author{Christina Lienstromberg}
\address{\textit{Christina Lienstromberg:} Institute of Applied Mathematics, University of Bonn, Endenicher Allee~60, 53115 Bonn, Germany}
\email{lienstromberg@iam.uni-bonn.de}

\author{Stefan M\"uller}
\address{\textit{Stefan M\"uller:} Institute of Applied Mathematics, University of Bonn, Endenicher Allee~60, 53115 Bonn, Germany}
\email{stefan.mueller@hcm.uni-bonn.de}

\begin{abstract}
We study the problem of existence and uniqueness of strong solutions to a 
degenerate quasilinear parabolic non-Newtonian thin-film equation. 
Originating from a non-Newtonian Navier--Stokes system the equation is derived
by lubrication theory and under the assumption that capillarity is the only driving force.
The fluid's shear-thinning rheology is described by the so-called Ellis constitutive law.
For flow behaviour exponents $\alpha \geq 2$ the corresponding 
initial boundary value problem fits into the abstract setting of 
\cite[Thm. 12.1]{A:1993}. Due to a lack of regularity this is not true
for flow behaviour exponents $\alpha \in (1,2)$. 
For this reason we prove an existence theorem for abstract quasilinear 
parabolic evolution problems with H\"older continuous dependence.
This result provides existence of strong solutions to the non-Newtonian thin-film 
problem in the setting of fractional Sobolev spaces and (little) H\"older spaces.
Uniqueness of strong solutions is derived by energy methods and by using the
particular structure of the equation.
\end{abstract}
\vspace{4pt}

\noindent\textsc{MSC (2010): 35K25, 35K35, 35K55, 35K59, 35K65, 35Q35, 76A05, 76A20, 76D03}

\noindent\textsc{Keywords: Thin-film equation, Non-Newtonian fluid, Classical solution, Quasilinear parabolic equation}





\maketitle

\section{Introduction}
This contribution is motivated by questions for existence and uniqueness of
strong solutions to the degenerate quasilinear fourth-order evolution problem
\begin{equation} \label{eq:thin-film_intro}
	\begin{cases}
		u_t +  a \bigl(u^3 \bigl[1 + |b u u_{xxx}|^{\alpha-1}\bigr] u_{xxx}\bigr)_x
		=
		0,
		&
		\quad
		t > 0, \, x \in \Omega := (-l,l)
		\\
		u_x = u_{xxx}
		=
		0,
		&
		\quad
		t > 0, \, x \in \partial\Omega
		\\
		u(0,\cdot)
		=
		u_0(\cdot),
		&
		\quad
		x \in \Omega,
	\end{cases}
\end{equation}
describing the evolution of the height $u(t,x)$ of a non-Newtonian incompressible thin liquid film
on a solid bottom. Here $a, b >0$ denote positive constants that depend on the fluid's properties 
and are specified later and $\alpha$ describes the fluid's flow behaviour.
Problem \eqref{eq:thin-film_intro} is derived by applying lubrication theory \cite{O:1998,GO:2003}
to the non-Newtonian Navier--Stokes system. We use the so-called Ellis constitutive law 
\cite{WS:1994,MB:1965} to describe the fluid's shear-thinning rheology. Observe that the fluid is
Newtonian if $\alpha = 1$. In this case we recover from $\eqref{eq:thin-film_intro}_1$ the well-known
thin-film equation
\begin{equation*}
	u_t + a (u^3 u_{xxx})_x = 0,
	\quad
	t > 0, \, x \in \Omega.
\end{equation*}
In this paper we treat the case $\alpha > 1$ in which the fluid is said to be shear-thinning. 
Moreover, we assume the fluid's dynamics to be driven by capillary forces only. 
In particular gravitational forces are neglected; c.f. Section \ref{sec:model} for a more detailed 
review of the derivation of the governing equations. 

\medskip

There is a rich literature on weak solutions of equations related to \eqref{eq:thin-film_intro}.
The first pioneering results on the classical Newtonian thin-film equation  
\begin{equation} \label{eq:TF} 
	u_t + a (u^n u_{xxx})_x = 0,
	\quad
	n \in \N,
\end{equation}
go back to \textsc{Bernis} and \textsc{Friedmann} \cite{BF:1990}. There the authors prove
in particular the existence of global non-negative weak solutions as well as positivity and
uniqueness for $n \geq 4$. Among others, also the works \cite{BBdP:1995,BP:1996}
can be mentioned in the context of global non-negative weak solutions.
For contributions to the study of non-Newtonian fluids the reader shall be in particular referred to
the works \cite{K:2001a,King:2001b} of \textsc{King} where non-Newtonian generalisations of \eqref{eq:TF} 
are investigated. In particular the author studies the doubly nonlinear equation
\begin{equation} \label{eq:doubly_nonl}
	u_t + \bigl(u^n |u_{xxx}|^{p-2} u_{xxx}\bigr)_x = 0,
\end{equation}
describing for $p\neq2$ the spreading of so-called power-law or Ostwald--de Waele fluids. 
In the work \cite{AG:2004} \textsc{Ansini} and \textsc{Giacomelli} establish the existence of global
non-negative weak solutions to \eqref{eq:doubly_nonl}
for $p > 2$ and $\frac{p-1}{2} < n < 2p-1$. 
In \cite{AG:2002} the same authors verify the existence of travelling-wave solutions and study a class 
of quasi-self-similar solutions to the equation \eqref{eq:thin-film_intro}. 
Moreover, in \cite{MRZ:2016} the authors establish 
the existence of weak solutions to a non-Newtonian Stokes equation with a viscosity that depends on the 
fluid's shear rate and its pressure at the same time.

\medskip

In this paper we focus on strong solutions. To construct such solutions 
it is convenient to give up the divergence form of \eqref{eq:thin-film_intro} in order
to emphasise the quasilinear structure. For sufficiently regular solutions
\eqref{eq:thin-film_intro} is equivalent to
\begin{equation} \label{eq:thin-film_non-div_intro}
	u_t + A(u, u_x, u_{xx}, u_{xxx})\, u_{xxxx} = F(u, u_{x}, u_{xx}, u_{xxx}),
	\quad
	t > 0,\ x \in \Omega,
\end{equation}
where
\begin{equation*}
	A(u, u_x, u_{xx}, u_{xxx}) = a u^3 \bigl(1 + \alpha |buu_{xxx}|^{\alpha-1}\bigr)
	\quad
	\text{and}
	\quad
	F(u, u_{x}, u_{xx}, u_{xxx}) = -3 a u^2 \bigl(1 + |\tilde{b} u u_{xxx}|^{\alpha-1}\bigr) u_x u_{xxx}
\end{equation*}
In \eqref{eq:thin-film_non-div_intro} the highest-order term $u_{xxxx}$ appears
only linearly, while the nonlinearity $|u_{xxx}|^{\alpha-1} u_{xxx}$ is on 
the right-hand side. However, the coefficient function $A$ contains the delicate
nonlinearity $|u_{xxx}|^{\alpha-1}$.

Regarding the existence and uniqueness of strong solutions to
\eqref{eq:thin-film_intro} it turns out that there is a qualitative difference 
between flow behaviour exponents $\alpha \in (1,2)$ and those larger
than or equal $2$. 
If we associate to \eqref{eq:thin-film_non-div_intro} an abstract quasilinear Cauchy problem 
\begin{equation*}
	\begin{cases}
		\dot{u} + \Acal(u) u = \Fcal(u), & \quad t > 0
		\\
		u(0) = 0, &
	\end{cases}
\end{equation*}
it turns out that in the latter case $\alpha \geq 2$ the operator $\Acal$ and the 
right-hand side $\Fcal$ are Lipschitz continuous in an appropriate sense and 
the classical H\"older theory of \textsc{Eidel'man} \cite[Thm. III.4.6.3]{E:1969} as well as
the abstract theory of \textsc{Amann} \cite[Thm. 12.1]{A:1993} are applicable and provide
existence and uniqueness of strong solutions. 

\medskip

The situation is more delicate for flow behaviour exponents $\alpha \in (1,2)$. In this 
regime the operator $\Acal$ and the right-hand side $\Fcal$ are only $(\alpha-1)$-H\"older
continuous, whence there is no hope to obtain existence and uniqueness by 
Banach's fixed point theorem. Instead we use compactness to prove an abstract 
existence result for quasilinear parabolic problems of fourth order with H\"older continuous 
coefficients in the spirit of \cite[Thm. 12.1]{A:1993}. More precisely, the proof exploits 
the a-priori estimates and the smoothing properties of the corresponding abstract linear equation
to obtain a solution for the quasilinear problem by a fixed-point argument. 

\medskip

Finally we apply this result to obtain existence of solutions to \eqref{eq:thin-film_intro}
in (fractional) Sobolev spaces as well as in (little) H\"older spaces.

\medskip

In order to prove uniqueness of solutions to \eqref{eq:thin-film_intro} 
we explicitly retain its divergence form. Indeed, we use the special
structure of the equation to obtain uniqueness by energy methods.This 
idea has already been used in the pioneering work \cite{BF:1990} of 
\textsc{Bernis} \& \textsc{Friedman} on the (Newtonian) thin-film equation.

\medskip

We close this introduction by briefly outlining the organisation of this work.

A derivation of the evolution problem is reviewed in Section \ref{sec:model}.
In Section \ref{sec:LP} we recall an abstract well-posedness result of \textsc{Amann} 
\cite{A:1995} for linear parabolic problems. 
Section \ref{sec:QP} is concerned with an existence result for abstract quasilinear 
parabolic equations.

In Section \ref{sec:application_existence} this result is applied to the non-Newtonian thin-film equation 
for all $\alpha > 1$ and in the regime of fractional Sobolev and little H\"older spaces, respectively.
Uniqueness of strong solutions to the non-Newtonian thin-film equation for flow behaviour 
exponents $\alpha \in (1,2)$ is proved in Section \ref{sec:uniqueness}. 
There we also show that constants are the only possible
steady state solutions of \eqref{eq:thin-film_intro}.
Finally, Section \ref{sec:max_existence_time} contains a result on the maximal
existence time of solutions to \eqref{eq:thin-film_intro}.
%

\section{Derivation of the non-Newtonian thin-film problem for an Ellis shear-thinning fluid} 
\label{sec:model}

For the convenience of the reader in this section we review the derivation of 
\eqref{eq:thin-film_intro}.

\medskip

\textsc{Lubrication approximation.} \hspace{0.2cm}
Starting from the non-Newtonian Navier--Stokes equations with no-slip boundary condition, 
the lubrication approximation \cite{GO:2003,O:1998}
leads -- under the assumption of a positive film height --  to the system 
\begin{alignat}{2}
	-p_x + \bigl(\mu(|v_z|) v_z\bigr)_z
	&=
	0 
	&& 
	\text{in } \Lambda \notag\\
	p_z 
	&=
	0
	&&
	\text{in } \Lambda \notag \\
	v_x + w_z
	&=
	0
	&& 
	\text{in } \Lambda \notag \\
	v = w
	&=
	0
	&& 
	\text{on } z = 0 \label{eq:No-slip} \\
	p
	&=
	- \sigma u_{xx}
	\quad
	&&
	\text{on } z = u(t,x) \notag \\
	v_z
	&=
	0
	\quad
	&&
	\text{on } z = u(t,x) \notag \\
	u_t + v u_x
	&=
	w
	&& 
	\text{on } z = u(t,x) \notag
\end{alignat}
of dimensionless equations for the velocity field $(v,w)=(v(t,x,z),w(t,x,z))$, 
the pressure $p=p(t,x,z)$ and the film height $u=u(t,x)$. Here, $\sigma > 0$ 
denotes the constant surface tension and $\mu: \R \to \R$ the fluid's viscosity. The fluid 
domain $\Omega$ is defined by
\begin{equation*}
	\Lambda := \left\{(x,z) \in \R^2; 0 < x < l \text{ and } 0 < z < u(t,x)\right\}.
\end{equation*}
Integrating $\eqref{eq:No-slip}_2$ from $z$ to $u$ and using the boundary condition 
$\eqref{eq:No-slip}_5$ we obtain
\begin{equation*}
	p(x,z) 
	=
	p(x,u) 
	=
	-\sigma u_{xx},
	\quad
	x \in \Omega.
\end{equation*}
Moreover, an integration of $\eqref{eq:No-slip}_1$ from $z$ to $u$, together with the boundary condition
$\eqref{eq:No-slip}_6$ yields
\begin{equation} \label{eq:mu_uz}
	\mu(|v_z|) v_z
	=
	\sigma u_{xxx} (u-z),
	\quad
	(x,z) \in \Lambda.
\end{equation}

\medskip


\textsc{Shear-thinning rheology: Ellis constitutive law.} \hspace{0.2cm}
As for instance in \cite{AG:2002,WS:1994} we use the Ellis constitutive law
to describe the shear-thinning rheology. 
We thus introduce the shear stress $\tau: \R \to \R$ implicitly by the relation
\begin{equation}\label{eq:Ellis1}
	s = \frac{1}{\mu_0} \left(1 + \left|\frac{\tau(s)}{\tau_{\ast}}\right|^{\alpha - 1}\right) \tau(s)
\end{equation}
and set 
\begin{equation} \label{eq:Ellis2}
	\mu(s) = \frac{\tau(s)}{s}.
\end{equation}
Here $\mu_0$ denotes the viscosity at zero shear stress and $\tau_{\ast}$ is the shear
stress at which the viscosity is reduced by a factor $1/2$. Shear-thinning fluids are characterised 
by an apparent viscosity that decreases with increasing shear stress. For $\alpha > 1$ and 
$\tau_{\ast} \in (0,\infty)$ the shear-thinning behaviour is reflected by the Ellis law. Moreover, it
is observed in polymeric systems that at rather low and/or rather high shear rates the
viscosity approaches a Newtonian plateau. For most polymers and polymer solutions 
$\alpha$ varies from $1$ to $2$ (see i.e. \cite{BAH:1977,MB:1965}). This Newtonian
plateau may not be reflected by Ostwald--de Waele (pure power-law) fluids which gives rise
to the use of other models as for instance the Ellis law. Observe that in \eqref{eq:Ellis1}--\eqref{eq:Ellis2}
the Newtonian plateau may be recovered for $\alpha = 1$ or $1/\tau_{\ast} \to 0$. 

\medskip

Using \eqref{eq:Ellis1} and \eqref{eq:Ellis2} in \eqref{eq:mu_uz} leads to the equation
\begin{equation*}
	v_z(x,z)
	=
	\frac{\sigma}{\mu_0} u_{xxx} (u-z) 
	+
	\frac{\sigma^{\alpha}}{\mu_0 \tau_{\ast}^{\alpha-1}} |u_{xxx}|^{\alpha-1} u_{xxx} (u-z)^{\alpha},
	\quad
	(x,z) \in \Lambda,
\end{equation*}
whence the horizontal velocity is given by
\begin{equation*}
	v(x,z)
	=
	\frac{\sigma}{\mu_0} u_{xxx} \left(uz-\frac{z^2}{2}\right) 
	- \frac{\sigma^{\alpha}}{(\alpha+1) \mu_0 \tau_{\ast}^{\alpha-1}} 
	|u_{xxx}|^{\alpha-1} u_{xxx} \left((u-z)^{\alpha+1} - u^{\alpha+1}\right),
	\quad
	(x,z) \in \Lambda.
\end{equation*}
For $x \in (0,l)$ this finally yields
\begin{equation*}
	\int_0^u v(x,z) dz
	=
	\frac{\sigma}{3 \mu_0} u_{xxx} u^3
	+
	\frac{\sigma^{\alpha}}{(\alpha+2) \mu_0 \tau_{\ast}^{\alpha-1}} 
	|u_{xxx}|^{\alpha-1} u_{xxx} u^{\alpha+2}
\end{equation*}
and the evolution equation for the film's height $u$, cf. $\eqref{eq:No-slip}_7$, thus reads
\begin{equation*}
	u_t + a\, \bigl(u^3 \bigl[1 + |b\,u\, u_{xxx}|^{\alpha-1}\bigr] u_{xxx}\bigr)_x
	=
	0,
	\quad
	t > 0, x \in \Omega,
\end{equation*}
with
\begin{equation} \label{eq:a_b}
	a :=
	\frac{\sigma}{3 \mu_0}
	\quad
	\text{and}
	\quad
	b :=
	\left(\frac{3}{\alpha+2}\right)^{\frac{1}{\alpha-1}} \frac{\sigma}{\tau_{\ast}}.
\end{equation}

\medskip


\section{A well-posedness result for abstract linear parabolic initial value problems} \label{sec:LP}

This section is concerned with the solvability of linear parabolic 
evolution problems.
We mainly recall a basic well-posedness and regularity result 
for linear parabolic Cauchy problems. 
We start by introducing some notations and requirements.

\medskip

In the following let $E_0$ and $E_1$ be Banach spaces. We write
\begin{equation*}
	E_1 \xhookrightarrow{d} E_0
	\quad
	\text{and}
	\quad
	E_1 \xhookrightarrow{c} E_0
\end{equation*}
if the continuous embedding of $E_1$ in $E_0$ is in addition dense, 
respectively compact. We say that $(E_0, E_1)$ is a densely injected Banach couple if $E_1 \xhookrightarrow{d} E_0$.
Given a densely injected Banach couple $(E_0,E_1)$
and $\theta \in (0,1)$, we denote by $(\cdot,\cdot)_{\theta}$ and 
$[\cdot,\cdot]_{\theta}$ a real, respectively the complex interpolation functor
and set $E_{\theta} = (E_0,E_1)_{\theta}$, respectively $E_{\theta} = [E_0,E_1]_{\theta}$
with norm $||\cdot||_{E_{\theta}}$.
It is well-known \cite[Thm. I.2.11.1]{A:1995} that for a densely injected Banach couple $(E_1,E_0)$ we have
\begin{equation*}
	E_{\alpha} \xhookrightarrow{d} E_{\beta},
	\quad
	0 \leq \beta < \alpha \leq 1.
\end{equation*}
Similarly, if $E_1 \xhookrightarrow{c} E_0$, then
\begin{equation*}
	E_{\alpha} \xhookrightarrow{c} E_{\beta},
	\quad
	0 \leq \beta < \alpha \leq 1.
\end{equation*}

\medskip

Now let $(E_0, E_1)$ be a densely injected Banach couple and let  $T > 0$ be given. 
For each $t \in [0,T]$ let $\Acal(t)$ be a linear operator in $E_0$ with 
domain $E_1$ and let $\Fcal : [0,T] \to E_0$. We consider the linear Cauchy problem
\begin{equation}\label{eq:LP}
	\begin{cases}
		\dot{u} + \Acal(t) u &=\quad \Fcal(t) \quad \text{in } (0,T)
		\\
		u(0) &=\quad u_0.
	\end{cases}
\end{equation}
We call \eqref{eq:LP} parabolic, if $-\Acal(t)$ is for each $t \in [0,T]$ the infinitesimal generator
of a strongly continuous analytic semigroup on $E_0$, in symbols $\Acal \in \Hcal(E_1;E_0)$.
We equip $\Hcal(E_1;E_0)$ with the operator norm $||\cdot||_{\Lcal(E_1;E_0)}$.

\medskip

To derive estimates we need a quantitative description of $\mathcal H(E_1, E_0)$. 
Given $\kappa \ge 1$ and $\omega > 0$ we write
\begin{equation*}
	A \in \mathcal H(E_1, E_0, \kappa, \omega)
\end{equation*}
if $\omega + A$ is an isomorphism from $E_1$ to $E_0$  and 
\begin{equation}
	\kappa^{-1} \le \frac{\| \lambda + A \|_{E_0}}{|\lambda \|x\|_{E_0} + \| x\|_{E_1}} 
	\le \kappa \quad \text{whenever} \quad  x \in E_1\setminus\{0\}, \quad \Re \lambda \ge \omega.
\end{equation}
We have \cite[Thm. I.1.2.2]{A:1995}
\begin{equation} \label{eq:calH}
	\mathcal H(E_1, E_0) = \bigcup_{\substack{ \kappa \ge 1\\ \omega > 0}}  \mathcal H(E_1, E_0, \kappa, \omega).
\end{equation}

\medskip

We say that $u$ is a solution of \eqref{eq:LP} (in $E_0$) if $u \in C^1( (0,T]; E_0)$ such that $u(t) \in E_1$ for $t \in (0, T]$,
$u \in C([0,T]; E_0)$ with $u(0) = u_0 \in E_0$ and the differential equation holds in $(0, T)$. 
We recall the following fundamental well-posedness and regularity result 
for linear parabolic Cauchy problems \cite[Thm. II.1.2.1 and Thm. II.5.3.1]{A:1995}.

\medskip

\begin{theorem} \label{thm:existence_linear_Amann}
Let $(E_0,E_1)$ be a densely injected Banach couple and suppose that
\begin{equation} \label{eq:assumption_A_F}
	\Fcal \in C^{\rho}([0,T];E_0)
	\quad
	\text{and}
	\quad
	\Acal \in C^{\rho}([0,T];\mathcal{H}(E_1;E_0))
\end{equation}
for some $\rho \in (0,1)$. Then the following holds true.
\begin{itemize}
	\item[(i)]     \label{it:linear1}   If $u_0 \in E_0$ then the linear Cauchy problem \eqref{eq:LP} possesses a unique solution
		\begin{equation*}
			 u \in C^{\rho}((0,T);E_1) \cap C^{1+\rho}((0,T);E_0) \cap C([0,T]; E_0)
		\end{equation*}
	\item[(ii)]     \label{it:linear2} If $u_0 \in E_1$, then \eqref{eq:LP} possesses a unique solution
		\begin{equation*}
			u \in C([0,T];E_1) \cap C^1([0,T];E_0).
		\end{equation*}
	\item[(iii)] If $\alpha \in (0,1)$ and $u_0 \in E_\alpha$ then the unique solution in (i)  satisfies in addition
		\begin{equation} \label{eq:linear_C_E_alpha}
			u \in C([0, T]; E_\alpha).
		\end{equation}
\end{itemize}
Let now $0 \leq \beta \leq \alpha < 1$ and $\Fcal \in L_{\infty,{\text{loc}}}([0,T];E_0)$,
and assume that there exists a constant $C > 0$ such that
\begin{equation} \label{eq:A_uniformly_Holder}
	\Acal \in C^{\rho}([0,T];\Lcal(E_1;E_0))
	\quad
	\text{with}
	\quad
	[\Acal(\cdot)]_{C^{\rho}([0,T];\Lcal(E_1;E_0))} \leq C.
\end{equation}
Moreover assume that there are constants $\omega > 0, \kappa \geq 1$ and $\sigma \in \R$ such that
\begin{equation*}
	\sigma + \Acal \in C\bigl([0,T];\Hcal(E_1,E_0,\kappa,\omega)\bigr).
\end{equation*}
Then we have in addition that
\begin{itemize}
	\item[(iv)] if $u_0 \in E_{\alpha}$ then $u \in C^{\alpha-\beta}([0,T];E_{\beta})$ with
		\begin{equation}  \label{eq:hoelder_estimate_linear}
			||u(t) - u(s)||_{E_{\beta}}
			\leq 
			C (t-s)^{\alpha-\beta} e^{\nu T} \bigl(||u_0||_{E_{\alpha}} + ||\Fcal||_{L_{\infty}([0,T];E_0)}\bigr),
			\quad
			t, s \in [0,T].
		\end{equation}
		The constants $C, \nu > 0$ do not depend on $T$.
\end{itemize}
\end{theorem}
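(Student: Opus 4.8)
Parts~(i)--(iii) and the surrounding set-up are taken essentially verbatim from Amann's monograph \cite[Thm.~II.1.2.1, Thm.~II.5.3.1]{A:1995}, so the only assertion requiring an argument is~(iv), and the plan is to deduce it from the sharp smoothing estimates for the parabolic evolution operator associated with the family $\{\Acal(t)\}_{t\in[0,T]}$. Under the stated hypotheses this family generates an evolution operator $U(t,s)$, $0\le s\le t\le T$ (\cite[Ch.~II]{A:1995}), the solution $u$ of~(i) admits the representation
\begin{equation*}
  u(t) = U(t,0)u_0 + \int_0^t U(t,r)\Fcal(r)\dd r,
\end{equation*}
and, since the hypotheses fix $\kappa,\omega$ and a uniform H\"older seminorm bound on $\Acal$, the constants in all evolution-operator estimates may be taken independent of $T$, with every occurrence of $T$ entering only through a factor $e^{\omega(t-s)}$. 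The three estimates I would invoke are: $\norm{U(t,r)}_{\Lcal(E_0;E_\gamma)}\le C(t-r)^{-\gamma}e^{\omega(t-r)}$ for $0\le\gamma<1$; the uniform bound $\norm{U(s,0)}_{\Lcal(E_\alpha;E_\alpha)}\le Ce^{\omega s}$; and the ``deficiency'' estimate $\norm{(U(t,s)-\id)v}_{E_\beta}\le C(t-s)^{\gamma-\beta}e^{\omega(t-s)}\norm{v}_{E_\gamma}$ for $\beta\le\gamma<1$, with $C=C(\kappa,\omega,\beta,\gamma)$ throughout.

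Granting these, for $0<s<t\le T$ I would split, using the evolution law $U(t,r)=U(t,s)U(s,r)$,
\begin{equation*}
  u(t)-u(s) = \bigl(U(t,s)-\id\bigr)U(s,0)u_0 + \int_s^t U(t,r)\Fcal(r)\dd r + \int_0^s\bigl(U(t,s)-\id\bigr)U(s,r)\Fcal(r)\dd r,
\end{equation*}
and estimate the three pieces separately. The first term is bounded by $C(t-s)^{\alpha-\beta}e^{\omega T}\norm{u_0}_{E_\alpha}$ by the deficiency estimate with $\gamma=\alpha$, using $\norm{U(s,0)u_0}_{E_\alpha}\le Ce^{\omega T}\norm{u_0}_{E_\alpha}$. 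For the second term one gets $C(t-s)^{1-\beta}e^{\omega T}\norm{\Fcal}_{L_\infty([0,T];E_0)}$ from $\norm{U(t,r)}_{\Lcal(E_0;E_\beta)}\le C(t-r)^{-\beta}e^{\omega(t-r)}$ and integration in $r$; since $\alpha<1$ one has $(t-s)^{1-\beta}=(t-s)^{\alpha-\beta}(t-s)^{1-\alpha}\le C(t-s)^{\alpha-\beta}e^{\nu(t-s)}$, the factor $(t-s)^{1-\alpha}$ being absorbed into an exponential. For the third term one fixes $\gamma\in(\max\{\alpha,\beta\},1)$, writes $\norm{(U(t,s)-\id)U(s,r)}_{\Lcal(E_0;E_\beta)}\le C(t-s)^{\gamma-\beta}(s-r)^{-\gamma}e^{\omega(t-r)}$, integrates $(s-r)^{-\gamma}$ over $(0,s)$ to obtain $C(t-s)^{\gamma-\beta}s^{1-\gamma}e^{\omega T}\norm{\Fcal}_{L_\infty}$, and absorbs $(t-s)^{\gamma-\alpha}$ and $s^{1-\gamma}$ into exponentials since $\gamma\ge\alpha$. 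Adding the three contributions yields \eqref{eq:hoelder_estimate_linear} for $0<s<t$; the case $s=0$ follows by letting $s\downarrow0$ and using the continuity $u\in C([0,T];E_\alpha)\hookrightarrow C([0,T];E_\beta)$ from~(iii), which also gives $u\in C^{\alpha-\beta}([0,T];E_\beta)$.

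I expect the main obstacle to be bookkeeping rather than conceptual: one must locate the evolution-operator estimates in \cite{A:1995} in a form whose constants depend only on $\kappa$, $\omega$ and the uniform H\"older bound on $\Acal$ (and not on $T$), verify that the Bochner integral $\int_0^t U(t,r)\Fcal(r)\dd r$ and all manipulations are legitimate for $\Fcal\in L_{\infty,\mathrm{loc}}$ (approximating by $C^\rho$ data if necessary, which is harmless since the bounds only use $\norm{\Fcal(r)}_{E_0}$ pointwise), and check the uniform comparison $y^{\theta}\le C_\nu e^{\nu y}$ for $y\ge 0$ that converts the polynomial-in-$T$ factors into a single factor $e^{\nu T}$. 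The degenerate endpoints $\beta=0$ and $\alpha=\beta$ require no separate idea, as the estimates only simplify there.
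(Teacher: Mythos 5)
Your argument for part~(iv) is sound and is essentially the argument underlying the cited \cite[Thm.~II.5.3.1]{A:1995}: represent the solution via the evolution operator, split $u(t)-u(s)$ into the three pieces you name, invoke the smoothing and deficiency estimates with constants depending only on $\kappa$, $\omega$ and the uniform H\"older bound on $\Acal$ (all of which are available precisely because of the additional hypotheses stated before~(iv)), and absorb the residual polynomial factors $(t-s)^{1-\alpha}$, $(t-s)^{\gamma-\alpha}$, $s^{1-\gamma}$ into $e^{\nu T}$ using $y^{\theta}\leq C_{\nu}e^{\nu y}$. The paper's own proof is essentially a bare citation for (i), (ii) and (iv), so you have in effect reconstructed the contents of the citation rather than followed a different route.

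The one place where your summary is too quick is~(iii). You dismiss it as read off ``essentially verbatim from Amann,'' but the paper records that it is not immediate from the cited theorem: under the hypotheses of (i)--(iii) one only has $\Acal\in C^{\rho}([0,T];\Hcal(E_1,E_0))$, with no uniform $\kappa,\omega$ supplied. One must first observe that $\Acal([0,T])$, being the continuous image of a compact interval, is a compact subset of $\Hcal(E_1,E_0)$, and then invoke \cite[Cor.~I.1.3.2]{A:1995} to extract $\kappa\geq 1$, $\omega>0$ with $\Acal([0,T])\subset\Hcal(E_1,E_0,\kappa,\omega)$; only then does \cite[Thm.~II.5.3.1]{A:1995} with $\beta=\alpha$ yield $u\in C([0,T];E_\alpha)$. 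Your proof of (iv) does not need this step, because there the uniform $\kappa,\omega$ is an explicit hypothesis; but (iii) does, and it is the one genuinely local ingredient in the paper's proof, so it should be recorded rather than folded silently into the citation.
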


\medskip

A proof of this theorem can be found in the book \cite{A:1995}. 
While part (i) and part (iv) are due to \textsc{Amann},
part (ii) goes back to \textsc{Sobolevskii} \cite{S:1966} and \textsc{Tanabe} \cite{T:1960}.
Moreover, there is an analogous result by \textsc{Acquistapace \& Terreni} \cite{AT:1985}
which is proved by different methods.
As regards assertion (iii),  note that $\mathcal A([0,T])$ is compact subset of $\mathcal H(E_1, E_0)$. 
Thus by  \cite[Cor. I.1.3.2]{A:1995}. there exist $\kappa \ge 1$ and $\omega >0$ such that 
$ A([0,T]) \subset H(E_1, E_0, \kappa, \omega)$. Now  \eqref{eq:linear_C_E_alpha}
follows from \cite[Thm. II.5.3.1]{A:1995} applied with $\beta = \alpha$. 

\medskip


\section{Abstract existence theorem for the quasilinear problem} 
\label{sec:QP}

In this section we prove an existence result for abstract
quasilinear parabolic Cauchy problems based on the theory for 
linear parabolic problems. This result will later be applied to the 
non-Newtonian thin film equation in the setting of fractional Sobolev spaces
and (little) H\"older spaces.

\medskip

As in Section \ref{sec:LP} let $T > 0$ be given and consider the quasilinear Cauchy problem
\begin{equation}\label{eq:QP}
	\begin{cases}
		\dot{u} + \Acal(u) u &=\quad \Fcal(u) \quad \text{in } (0,T)
		\\
		u(0) &=\quad u_0,
	\end{cases}
\end{equation}
where $\Acal$ and $\Fcal$ are H\"older continuous in an appropriate sense, to be specified below.

\medskip

The existence result for \eqref{eq:QP} is deduced from the uniform 
a-priori estimates for the corresponding linear equation
and an application of the following fixed-point theorem \cite[Cor. 11.2]{GT:1998}. 

\medskip

\begin{theorem} \label{thm:fp}
Let $X$ be a Banach space. Let $B \subset X$ be closed, convex and not empty. 
If $S : B \to B$ is continuous and compact, then $S$ has a fixed point.
\end{theorem}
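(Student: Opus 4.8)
The plan is to derive this statement (Schauder's fixed point theorem in the form where the domain need not be compact but the map is compact) from Brouwer's fixed point theorem by a finite-dimensional approximation. First I would reduce to the case of a compact domain. Since $S$ is compact, the set $S(B)$ is relatively compact in $X$; put $K := \cl{\co(S(B))}$, the closed convex hull. Because $B$ is closed and convex with $S(B) \subseteq B$, we have $\co(S(B)) \subseteq B$ and hence $K \subseteq \cl{B} = B$, while by Mazur's theorem the closed convex hull of a relatively compact subset of a Banach space is compact, so $K$ is nonempty, convex and compact. Moreover $S(K) \subseteq S(B) \subseteq K$, so $S$ restricts to a continuous self-map of $K$, and any fixed point of $S|_K$ is a fixed point of $S$. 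Thus it suffices to treat a continuous self-map of a compact convex set.

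Next I would construct the Schauder projections onto finite-dimensional simplices. Fix $n \in \N$ and, using compactness of $K$, choose $y_1, \dots, y_m \in K$ whose $1/n$-balls cover $K$. With $\mu_i(x) := \max\{0,\, 1/n - \norm{x - y_i}\}$, which are continuous, nonnegative, and satisfy $\sum_i \mu_i(x) > 0$ for every $x \in K$, define
\begin{equation*}
	P_n(x) := \frac{\sum_{i=1}^{m} \mu_i(x)\, y_i}{\sum_{i=1}^{m} \mu_i(x)}, \qquad x \in K.
\end{equation*}
Then $P_n$ is continuous, maps $K$ into the simplex $K_n := \co\{y_1, \dots, y_m\} \subseteq \spn\{y_1, \dots, y_m\}$, which is compact, convex and finite-dimensional, and satisfies $\norm{P_n(x) - x} \le 1/n$ for all $x \in K$, since $P_n(x)$ is a convex combination of those $y_i$ that lie within distance $1/n$ of $x$.

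Then I would invoke Brouwer's theorem. The map $S_n := P_n \circ S : K_n \to K_n$ is continuous and $K_n$ is homeomorphic to a compact convex subset of some $\R^d$, so Brouwer's fixed point theorem provides $x_n \in K_n \subseteq K$ with $P_n(S(x_n)) = x_n$. Finally I would pass to the limit: by compactness of $K$ some subsequence $x_{n_j} \to x^\ast \in K$; from $\norm{x_n - S(x_n)} = \norm{P_n(S(x_n)) - S(x_n)} \le 1/n \to 0$ and continuity of $S$ we get $S(x_{n_j}) \to S(x^\ast)$ as well as $S(x_{n_j}) = x_{n_j} + o(1) \to x^\ast$, hence $S(x^\ast) = x^\ast$.

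The main obstacle is concentrated in the two classical inputs: Brouwer's fixed point theorem, the only genuinely topological ingredient, which I would take as known, and Mazur's theorem on the compactness of closed convex hulls of compact sets in a Banach space, which is what makes the reduction to a compact domain legitimate and guarantees $K \subseteq B$. Everything else — the partition-of-unity construction of $P_n$, the bound $\norm{P_n - \id} \le 1/n$, and the compactness/diagonal passage to the limit — is routine once those are available.
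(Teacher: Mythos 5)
Your proof is correct. The paper does not prove this result itself — it cites it as \cite[Cor.~11.2]{GT:1998} — and your argument (reduction to a compact convex domain via Mazur's theorem on closed convex hulls of precompact sets, Schauder projections onto finite-dimensional simplices, and Brouwer's fixed point theorem, followed by a diagonal passage to the limit) is precisely the standard proof given in Gilbarg--Trudinger and similar references.
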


\medskip

It is worthwhile to mention again that with Theorem \ref{thm:fp} we may obtain 
an existence result for \eqref{eq:QP} by compactness of the solution operator
for the corresponding linear problem. Since we do not require Lipschitz continuity
we can in general not expect to get uniqueness.  

\medskip

\begin{theorem}\label{th:existence_quasilinear_abstract} 
Suppose that $(E_0, E_1)$ is a densely injected Banach couple and that
the injection is in addition compact.
Let $0 < \beta < \alpha \le 1$ and $\sigma \in (0,\alpha-\beta)$. 
Let $\mu \in (0,1)$ and assume that the maps
\begin{eqnarray}
	\mathcal A: E_\beta &\to& \Hcal(E_1;E_0), \\
	\mathcal F: E_\beta &\to& E_0
\end{eqnarray}
are $\mu$-H\"older continuous on all balls $B_{\beta}(0,R)$ in $E_{\beta}$.
Then for each $R' > 0$ there exist a positive time $T > 0$ with the following property. 
If $u_0 \in E_\alpha$ and $\| u_0\|_{E_\alpha} \le R'$ then 
the quasilinear initial value problem \eqref{eq:QP}
possesses a solution  
\begin{equation*}
	u \in C^{\mu\sigma}((0,T];E_1) \cap C^{1+\mu\sigma}((0,T];E_0) \cap C([0, T]; E_\alpha).
\end{equation*}
Moreover, for all $\alpha' \in (\beta+\sigma,\alpha)$ the solution satisfies
\begin{equation*}
	u \in C^{\sigma}([0,T];E_{\alpha'-\sigma}) \cap C^{\alpha'-\beta}([0,T];E_{\beta}).
\end{equation*}
\end{theorem}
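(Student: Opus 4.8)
The plan is to set up a Schauder-type fixed-point argument in a suitable space of curves, using the linear theory of Theorem~\ref{thm:existence_linear_Amann} to define the solution operator and to obtain the compactness needed for Theorem~\ref{thm:fp}. Concretely, fix $R' > 0$ and $u_0 \in E_\alpha$ with $\|u_0\|_{E_\alpha} \le R'$. Choose $R := 2R'$ (or slightly larger) and let $\kappa, \omega$ be uniform constants such that $\sigma_0 + \mathcal A(v) \in \Hcal(E_1, E_0, \kappa, \omega)$ for all $v \in \cl{B_\beta(0,R)}$; this uniformity comes from the compactness of $\cl{B_\beta(0,R)}$ under $E_1 \xhookrightarrow{c} E_0$ together with \cite[Cor. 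I.1.3.2]{A:1995}, exactly as in the remark following Theorem~\ref{thm:existence_linear_Amann}. On the space
\begin{equation*}
	X := C^{\sigma}([0,T]; E_\beta), \qquad
	B := \set{v \in X}{v(0) = u_0,\ \|v(t)\|_{E_\beta} \le R \text{ for } t \in [0,T],\ [v]_{C^\sigma([0,T];E_\beta)} \le L},
\end{equation*}
for a constant $L$ to be fixed, define $S : B \to X$ by letting $Sv = u$ be the unique solution (from Theorem~\ref{thm:existence_linear_Amann}(i),(iv)) of the \emph{linear} problem $\dot u + \mathcal A(v(t))\, u = \mathcal F(v(t))$, $u(0) = u_0$. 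Here one uses that $t \mapsto \mathcal A(v(t))$ is $\mu\sigma$-H\"older from $[0,T]$ into $\mathcal H(E_1;E_0)$ (composition of the $\mu$-H\"older map $\mathcal A$ on $B_\beta(0,R)$ with the $\sigma$-H\"older curve $v$), and similarly $t \mapsto \mathcal F(v(t)) \in C^{\mu\sigma}([0,T];E_0) \subset L_\infty([0,T];E_0)$, so all hypotheses of the linear theorem are met with exponent $\rho = \mu\sigma$.

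The heart of the argument is to verify that, for $T$ small enough depending only on $R'$ (via $R$, $\kappa$, $\omega$, and the H\"older bounds of $\mathcal A, \mathcal F$ on $B_\beta(0,R)$), the map $S$ sends $B$ into $B$. This is where estimate \eqref{eq:hoelder_estimate_linear} is used: applying part (iv) with the chosen $\beta$, $\alpha$ gives, for $u = Sv$,
\begin{equation*}
	\|u(t) - u(s)\|_{E_\beta} \le C\, (t-s)^{\alpha-\beta}\, e^{\nu T}\bigl(\|u_0\|_{E_\alpha} + \|\mathcal F(v(\cdot))\|_{L_\infty([0,T];E_0)}\bigr) \le C\, (t-s)^{\alpha-\beta} e^{\nu T}(R' + M),
\end{equation*}
where $M := \sup_{w \in \cl{B_\beta(0,R)}} \|\mathcal F(w)\|_{E_0}$ is finite (a $\mu$-H\"older map on a ball is bounded on that ball). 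Since $\sigma < \alpha - \beta$, we get $[u]_{C^\sigma([0,T];E_\beta)} \le C T^{\alpha-\beta-\sigma} e^{\nu T}(R'+M)$, which can be made $\le L$ by shrinking $T$; and then $\|u(t)\|_{E_\beta} \le \|u(t) - u(0)\|_{E_\beta} + \|u_0\|_{E_\beta} \le C T^{\alpha-\beta} e^{\nu T}(R'+M) + c\|u_0\|_{E_\alpha}$ (using $E_\alpha \hookrightarrow E_\beta$), which is $\le R = 2R'$ once $T$ is small and $c$ is absorbed by the choice $R = 2R'$ — more carefully one chooses $R$ as a fixed multiple of $R'$ large enough to swallow the embedding constant, then shrinks $T$ to kill the $CT^{\alpha-\beta}$ term. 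The constant $L$ is then fixed by the displayed seminorm bound. Crucially the constants $C, \nu$ in \eqref{eq:hoelder_estimate_linear} are $T$-independent, so this is a genuine smallness-of-$T$ argument and $T$ depends only on $R'$.

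Next, $S(B) \subset B$ is relatively compact in $X = C^\sigma([0,T];E_\beta)$: for $u \in S(B)$, part (iv) applied with any $\alpha'' \in (\beta + \sigma, \alpha)$ in place of $\beta$ gives a uniform bound on $[u]_{C^{\alpha-\alpha''}([0,T];E_{\alpha''})}$ and on $\sup_t \|u(t)\|_{E_{\alpha''}}$; since $E_{\alpha''} \xhookrightarrow{c} E_\beta$ (compactness of the embedding propagates through interpolation, as recalled in Section~\ref{sec:LP}) and $\alpha - \alpha'' < $ — wait, one arranges $\alpha'' $ so that the bound is in a higher-regularity/higher-space H\"older class than $C^\sigma([0,T];E_\beta)$, and then Arzel\`a--Ascoli (vector-valued form: equicontinuity in $t$ plus pointwise relative compactness in $E_\beta$, upgraded to $C^\sigma$ by interpolating the two H\"older bounds) yields relative compactness in $X$. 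Continuity of $S : B \to B$ is the last ingredient: if $v_n \to v$ in $X$, then $\mathcal A(v_n(t)) \to \mathcal A(v(t))$ and $\mathcal F(v_n(t)) \to \mathcal F(v(t))$ (with uniform H\"older bounds), and one shows $Sv_n \to Sv$ either by the same a-priori estimates applied to the difference (writing the equation for $Sv_n - Sv$ with right-hand side $(\mathcal A(v(t)) - \mathcal A(v_n(t)))(Sv_n) + \mathcal F(v_n(t)) - \mathcal F(v(t))$, which tends to $0$ in $L_\infty([0,T];E_0)$) combined with the compactness just established to pass to the limit along subsequences, identifying all limit points as the unique linear solution $Sv$. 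Theorem~\ref{thm:fp} then produces a fixed point $u \in B$, which solves \eqref{eq:QP}; the asserted regularity $u \in C^{\mu\sigma}((0,T];E_1) \cap C^{1+\mu\sigma}((0,T];E_0) \cap C([0,T];E_\alpha)$ follows from Theorem~\ref{thm:existence_linear_Amann}(i) and (iii) applied to the linear problem with coefficient $\mathcal A(u(\cdot)) \in C^{\mu\sigma}$, and the finer statement $u \in C^\sigma([0,T];E_{\alpha'-\sigma}) \cap C^{\alpha'-\beta}([0,T];E_\beta)$ for $\alpha' \in (\beta+\sigma,\alpha)$ follows from a second application of part (iv) with the parameter pair $(\alpha'-\sigma, \alpha')$ and $(\beta, \alpha')$ respectively.

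The main obstacle I expect is the bookkeeping in the self-mapping step: one must choose $R$ (as a multiple of $R'$), then $L$, then $T$ in the right order so that all constants are controlled purely in terms of $R'$ and the fixed data, and in particular one must be careful that the composition $\mathcal A \circ v$ only ever evaluates $\mathcal A$ on the ball $B_\beta(0,R)$ where H\"older continuity is assumed — which is exactly what the invariance $\|v(t)\|_{E_\beta} \le R$ guarantees. A secondary technical point is the vector-valued Arzel\`a--Ascoli / interpolation argument needed to get compactness in the H\"older space $C^\sigma([0,T];E_\beta)$ rather than merely in $C([0,T];E_\beta)$; this is handled by interpolating the uniform $C^0([0,T];E_{\alpha''})$ and $C^{\alpha-\alpha''}([0,T];E_\beta)$ bounds against the compact embedding $E_{\alpha''} \xhookrightarrow{c} E_\beta$.
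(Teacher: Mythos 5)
Your overall scheme — frozen-coefficient linear solve, Schauder fixed point in a ball of $C^\sigma([0,T];E_\beta)$, compactness via parabolic smoothing plus Arzel\`a--Ascoli and interpolation, then self-mapping from the $T$-independent constants in \eqref{eq:hoelder_estimate_linear} — is precisely the paper's route. But the step where you produce uniform parabolicity constants $\kappa,\omega$ is wrong as stated. You claim $\mathcal A(v)\in\mathcal H(E_1,E_0,\kappa,\omega)$ uniformly for $v\in\cl{B_\beta(0,R)}$ because that ball is compact; it is not. A closed ball of $E_\beta$ is never compact in $E_\beta$ in infinite dimensions, so $\mathcal A(\cl{B_\beta(0,R)})$ is merely bounded in $\mathcal L(E_1;E_0)$, not compact in $\mathcal H(E_1,E_0)$, and \cite[Cor. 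I.1.3.2]{A:1995} does not apply. Boundedness alone does not suffice: a bounded family of generators of analytic semigroups (e.g.\ the rotations $e^{i(\pi/2-1/n)}B$ of a fixed positive self-adjoint $B$) can fail to admit a common $\kappa,\omega$.

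The fact that does hold, and that the paper uses, is that the $E_\alpha$-ball $K=\{w:\|w\|_{E_\alpha}\le R'\}$ is compact \emph{as a subset of $E_\beta$}, because $\alpha>\beta$ and $E_1\xhookrightarrow{c}E_0$ propagates to $E_\alpha\xhookrightarrow{c}E_\beta$. Then $\mathcal A(K)$ is compact in $\mathcal H(E_1,E_0)$, and \eqref{eq:calH} together with the perturbation result \cite[Thm. I.1.3.1]{A:1995} give $\kappa,\omega$ and an $r_0>0$ such that $\mathcal A(w)\in\mathcal H(E_1,E_0,\kappa,\omega)$ whenever $\mathrm{dist}_{E_\beta}(w,K)\le r_0$. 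In your setup the constraints $v(0)=u_0$ and $[v]_{C^\sigma}\le L$ yield $\|v(t)-u_0\|_{E_\beta}\le LT^\sigma$, so the uniformity can be recovered \emph{provided} you additionally require $LT^\sigma\le r_0$ — a coupling between $L$ and $T$ that your bookkeeping omits and that also controls the set on which the H\"older constants of $\mathcal A,\mathcal F$ are used. The paper avoids this coupling by taking the fixed-point set to be the $C^\sigma$-ball $\bar B(\bar u_0,r_0)$ around the constant curve $\bar u_0$ with the \emph{same} $r_0$, so the neighborhood condition is automatic. A secondary remark: in the compactness step the interpolation should be between the $C^0$ and $C^{\alpha'-\beta}$ seminorms of $u$ \emph{both with values in $E_\beta$}, Arzel\`a--Ascoli being fed by the compact embedding $E_{\alpha'-\sigma}\xhookrightarrow{c}E_\beta$ only for pointwise relative compactness; your "higher-regularity/higher-space" discussion had not yet pinned down this pair.
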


\medskip

\begin{proof}
Fix $\alpha$, $\beta$ and $R'$. We will use a fixed-point argument in a suitable ball in the space 
$C^\rho([0, T]; E_\beta)$ for a suitable $\rho >0$. The argument uses
the estimate   \eqref{eq:hoelder_estimate_linear}. Thus we start with the following observation. 
There exist $\omega > 0$, $\kappa \ge 1$ and $r_0 > 0$
such that for all $w \in E_\beta$ we have the following implication:
\begin{equation} \label{eq:uniform_kappa_omega}
	\text{$\|u_0 \|_{E_\alpha} \le R'$ 
	and $\| w - u_0 \|_{E_\beta} \le r_0$}   
	\quad \Longrightarrow \quad 
	\mathcal A(w) \in \mathcal H(E_1, E_0, \kappa, \omega).
\end{equation}
To see this, note first that the set $K = \{ u_0 \in E_\alpha : \| u_0 \|_{E_\alpha} \le R'\}$ is compact in $E_\beta$. 
Hence $\mathcal A(K)$ is a compact set in $\mathcal H(E_1, E_0)$. By \eqref{eq:calH} and a simple perturbation 
argument, see \cite[Thm. I.1.3.1]{A:1995},  this implies that there exist an open set $V$ and $ \omega > 0$, $\kappa \ge 1$
such that $\Acal(K) \subset V \subset \mathcal H(E_1, E_0, \kappa, \omega)$. 
Let $U$ be the preimage $U = \mathcal A^{-1}(V)$. Then $U$ is open in $E_\beta$  and $U \supset K$. 
Since $K$ is compact it follows that $U$ contains a $2 r_0$ neighbourhood of $K$ for some $r_0 > 0$. Thus  
\eqref{eq:uniform_kappa_omega} holds.

Now let $u_0 \in E_{\alpha}$ such that $||u_0||_{E_{\alpha}} \leq R^{\prime}$. 
By $\bar{u}_0$ we denote the constant extension of $u_0$ on $[0,T]$.
We set up a fixed-point problem suitable for an application of Theorem \ref{thm:fp} as follows.
We set
\begin{equation*}
	X := C^{\sigma}([0,T];E_{\beta})
	\quad
	\text{and}
	\quad
	\bar B := \bar{B}(\bar{u}_0,r_0) := \{v \in X; ||v - \bar{u}_0||_X \leq r_0\}.
\end{equation*}
Given $v \in \bar{B}$, the regularity assumptions on $\Acal$ and $\Fcal$ imply 
\begin{equation*}
	\Acal(v(\cdot)) \in C^{\mu\sigma}\bigl([0,T];\Hcal(E_1;E_0)\bigr)
	\quad
	\text{and}
	\quad
	\Fcal(v(\cdot)) \in C^{\mu\sigma}([0,T];E_0).
\end{equation*}
Furthermore, for $v \in \bar{B}$ we have 
\begin{align*}
	[\Acal(v(\cdot))]_{C^{\mu\sigma}([0,T];\Lcal(E_1;E_0))}
	& \leq
	[\Acal]_{C^{\mu}(\bar{B}_{\beta}(0,R'+r_0);\Lcal(E_1;E_0))} [v]_{C^{\sigma}([0,T];E_{\beta})}^{\mu}
	\\
	&\leq
	[\Acal]_{C^{\mu}(\bar{B}_{\beta}(0,R'+r_0);\Lcal(E_1;E_0))} [v - \bar{u}_0]_{C^{\sigma}([0,T];E_{\beta})}^{\mu}
	\\
	&\leq
	[\Acal]_{C^{\mu}(\bar{B}_{\beta}(0,R'+r_0);\Lcal(E_1;E_0))} r_0^{\mu},
\end{align*}
i.e. a uniform bound of the $\mu\sigma$-H\"older norm of $\Acal$ on $[0,T]$. 
Moreover, recalling that $u_0 \in \bar{B}_{\alpha}(0,R') \subset E_{\alpha}$ by assumption,
we deduce from \eqref{eq:uniform_kappa_omega} the existence of some 
$\kappa \geq 1$ and $\omega > 0$ such that
\begin{equation*}
	\Acal(v(t)) \in \Hcal(E_1,E_0,\kappa,\omega),
	\quad
	t \in [0,T],
\end{equation*}
for all $v \in \bar{B}$.
Finally, observe that for all $v \in \bar{B}$ we have the estimate
\begin{equation} \label{eq:F_uniform}
\begin{split}
	||\Fcal(v)||_{L_{\infty}([0,T];E_0)}
	& \leq
	||\Fcal(u_0)||_{E_0} + [\Fcal]_{C^{\mu}(\bar{B}_{\beta}(0,R'+r_0);E_0)} 
	||v - \bar{u}_0||_{C^{\sigma}([0,T];E_{\beta})}^{\mu}
	\\
	& \leq
	||\Fcal(u_0)||_{E_0} + [\Fcal]_{C^{\mu}(\bar{B}_{\beta}(0,R'+r_0);E_0)} r_0^{\mu}.
\end{split}
\end{equation}

Thus, we are in a position to apply Theorem \ref{thm:existence_linear_Amann} to deduce existence of
a unique solution $u = Sv$ of the linear evolution problem
\begin{equation}\label{eq:LP_proof}
	\begin{cases}
		\dot{u} + \Acal(v(t))\, u &=\quad \Fcal(v(t)) \quad \text{in } (0,T]
		\\
		u(0) &=\quad u_0
	\end{cases}
\end{equation}
in the sense that $u = Sv$ satisfies
\begin{equation*}
	u \in C^{\mu\sigma}((0,T];E_1) \cap C^{1+\mu\sigma}((0,T];E_0) \cap C([0,T];E_{\alpha}).
\end{equation*}
Let now $\alpha^{\prime} \in (\beta+\sigma,\alpha)$ and note that 
$u_0 \in E_{\alpha} \hookrightarrow E_{\alpha^{\prime}}$. Then the solution
enjoys in addition the regularity 
\begin{equation*}
	u \in C^{\sigma}([0,T];E_{\alpha^{\prime}-\sigma}) \cap C^{\alpha^{\prime}-\beta}([0,T];E_{\beta}) 
\end{equation*}
and by \eqref{eq:hoelder_estimate_linear} we have the estimates
\begin{equation*}
	||u||_{C([0,T];E_{\alpha})} 
	+
	[u]_{C^{\sigma}([0,T];E_{\alpha^{\prime}-\sigma})} 
	+
	[u]_{C^{\alpha^{\prime}-\beta}([0,T];E_{\beta})}
	\leq
	C e^{\nu T} \bigl(||u_0||_{E_{\alpha}} + ||\Fcal(v)||_{L_{\infty}([0,T];E_0)}\bigr),
\end{equation*}
where $\nu \geq 0$ is independent of $T$. 
In view of the uniform estimate \eqref{eq:F_uniform} on $\Fcal$ the solution $u=Sv$ satisfies in fact the 
uniform estimate
\begin{equation} \label{eq:linear_estimates}
	||u||_{C([0,T];E_{\alpha})} 
	+
	[u]_{C^{\sigma}([0,T];E_{\alpha^{\prime}-\sigma})} 
	+
	[u]_{C^{\alpha^{\prime}-\beta}([0,T];E_{\beta})}
	\leq
	C(R',r_0)\, e^{\nu T},
\end{equation}
where the constants $C$ and $\nu$ are independent of $T$.

In order to deduce from Theorem \ref{thm:fp} the existence of a fixed point $u = Su \in X$
we are thus left with verifying that
\begin{itemize}
	\item[(i)] $S: \bar{B} \to X$ is continuous;
	\item[(ii)] $S: \bar{B} \to X$ is compact;
	\item[(iii)] $S$ preserves the ball $\bar{B}$.
\end{itemize}

(i) Continuity of $S$. From the linear theory we know that $S$ maps the space $\bar{B}$ continuously into
\begin{equation*}
	Y := C^{\sigma}([0,T];E_{\alpha^{\prime}-\sigma}) \cap C^{\alpha^{\prime}-\beta}([0,T];E_{\beta})
\end{equation*} 
Since $\alpha^{\prime} - \sigma >  \beta$ we have $E_{\alpha^{\prime}-\sigma} \hookrightarrow E_{\beta}$
and hence continuity of the embedding
\begin{equation*}
	C^{\sigma}([0,T];E_{\alpha^{\prime}-\sigma}) \hookrightarrow C^{\sigma}([0,T];E_{\beta}).
\end{equation*}

\medskip

(ii) Compactness of $S$. We know that $S$ maps $\bar{B}$ to $Y$ from the linear theory.
We show that $Y$ is compactly embedded into $X$. Since $\alpha' - \sigma > \beta$,
it follows from compactness of the 
embedding
$E_{\alpha^{\prime}-\sigma} \xhookrightarrow{c} E_{\beta}$ 
and the Arzelà--Ascoli theorem that 
\begin{equation*}
	C^{\sigma}([0,T];E_{\alpha^{\prime}-\sigma}) \xhookrightarrow{c} C([0,T];E_{\beta}).
\end{equation*}
Hence $S$ is a compact operator from $\bar{B}$ to $C([0,T];E_{\beta})$.
In view of the interpolation estimate
\begin{equation} \label{eq:interpolation_est}
	[u]_{C^{\sigma}([0,T];E_{\beta})}
	\leq
	||u||_{C([0,T];E_{\beta})}^{1-\theta}
	[u]_{C^{\alpha^{\prime}-\beta}([0,T];E_{\beta})}^{\theta}
	\quad
	\text{with }
	\theta = \frac{\sigma}{\alpha^{\prime} - \beta}
\end{equation}
we see that $S$ is even a compact operator from $\bar{B}$ to $X$.

(iii) $S(\bar{B}) \subset \bar{B}$. To deduce that $S$ has a fixed point we are 
left with verifying that for sufficiently small $T > 0$ the operator $S$ maps the ball 
$\bar{B} \subset X$ into itself. 
Recall from \eqref{eq:linear_estimates} that given $v \in \bar{B}$, we obtain the estimate 
\begin{equation*}
	[u]_{C^{\alpha^{\prime}-\beta}([0,T];E_{\beta})}
	\leq
	C(R',r_0)\, e^{\nu T}
\end{equation*}
for the solution $u=Sv$ of the linear problem \eqref{eq:LP_proof}. This implies on
the one hand that
\begin{equation} \label{eq:ball_1}
	||u - \bar{u}_0||_{C([0,T];E_{\beta})} 
	\leq
	C(R',r_0) T^{\alpha' - \beta} e^{\nu T}.
\end{equation}
and on the other hand (recall that $\sigma < \alpha' - \beta$)
\begin{equation} \label{eq:ball_2}
	[u - \bar{u}_0]_{C^{\sigma}([0,T];E_{\beta})}
	\leq
	T^{\alpha' - \beta - \sigma} [u - \bar{u}_0]_{C^{\alpha' - \beta}([0,T];E_{\beta})}
	\leq
	C(R',r_0) T^{\alpha' - \beta - \sigma} e^{\nu T}.
\end{equation}
Combining \eqref{eq:ball_1} and \eqref{eq:ball_2} we find that
\begin{equation*}
	||u - \bar{u}_0||_{C^{\sigma}([0,T];E_{\beta})}
	\leq
	C(R',r_0) (T^{\alpha' - \beta - \sigma} + T^{\alpha' - \beta})\, e^{\nu T}.
\end{equation*}
Thus for sufficiently small $T > 0$ the right-hand side of this inequality is less than or equal $r_0$ 
which proves that $S(\bar{B}) \subset \bar{B}$.
\end{proof}

\medskip

\begin{remark}
Recall that our motivation to prove this abstract result is to prove existence of
solutions to the non-Newtonian thin-film equation \eqref{eq:thin-film_intro} in different
function spaces. As mentioned in the introduction for flow behaviour exponents
$\alpha \geq 2$ for instance the results of Amman \cite[Thm. 12.1]{A:1993} and
Eidel'man \cite[III.4.6.3]{E:1969} are applicable and provide existence and
uniqueness at the same time. On the other hand there seem to be no abstract
results available for $\alpha \in (1,2)$. Roughly speaking the reason for this qualitative difference 
is that in the case $\alpha \in (1,2)$ the Nemitskii operator associated to the function 
$f(x) = |x|^{\alpha-1}$ is only $(\alpha-1)$-H\"older continuous, while it is Lipschitz
continuous for $\alpha \geq 2$. The Lipschitz continuity allows one to get
existence and uniqueness by a contraction argument.
However, concerning existence of solutions to the non-Newtonian thin-film equation 
\eqref{eq:thin-film_intro} we cover the case of flow behaviour exponents $\alpha \in (1,2)$ 
by applying our abstract existence result Theorem \ref{th:existence_quasilinear_abstract},
while we deduce uniqueness from energy estimates that use the structure of the
particular equation.
\end{remark}

\medskip

In the remainder of this section we prove a result on the maximal existence time of solutions. We use the usual continuation argument to obtain
a contradiction, but some care is required in the formulation of the result since solutions may not be unique. We
fix $u_0 \in E_\alpha$ and set
\begin{equation}  \label{eq:max_time}
	\bar{T}   :=
	\sup\{T > 0; \exists \text{ solution } u \in C^1( (0,T]; E_0) \cap C( (0,T]; E_1) \cap C([0,T];E_{\alpha}) \text{ of } 
	\eqref{eq:QP} \text{ with $u(0) = u_0$} \}
\end{equation}
and prove that the following holds true.

\medskip

\begin{theorem} \label{th:max_ex_time}
Let $0 < \beta < \alpha \leq 1$, let  $u_0 \in E_\alpha$, let
$\bar T$ be defined by \eqref{eq:max_time} and assume that $\bar T < \infty$. 
Further  let $\gamma \in (\beta, \alpha]$ and $R > 0$.
Then there exists a time $T_R(\gamma) \in (0, \bar T)$ with the following property.
If $T \ge T_R(\gamma)$ and if
\begin{equation*}
	u \in C((0,T];E_1) \cap C^1((0,T];E_0) \cap C([0,T];E_{\alpha})
\end{equation*}
is  a solution of \eqref{eq:QP} with $u(0) = u_0$, then
\begin{equation} 
	||u(t)||_{E_\gamma} > R  \quad  \text{for all $t \ge  T_R(\gamma)$}.
\end{equation}
\end{theorem}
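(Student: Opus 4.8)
The proof is a continuation argument, with the lack of uniqueness accommodated by restarting the solution from a point where the available information --- a bound on $\|u(t_0)\|_{E_\gamma}$ --- already determines the length of the new existence interval. The plan is to argue by contradiction: fix $\gamma$ and $R$, define $T_R(\gamma)$ as below, and suppose there is a solution $u$ of \eqref{eq:QP} on some $[0,T]$ with $u(0)=u_0$, $T \ge T_R(\gamma)$, and $\|u(t_0)\|_{E_\gamma} \le R$ for some $t_0 \in [T_R(\gamma),T]$. I will splice to $u$ a short continuation and thereby produce a solution on an interval strictly longer than $\bar T$, contradicting \eqref{eq:max_time}.

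The main ingredient is a restart form of Theorem \ref{th:existence_quasilinear_abstract}. Since $\gamma \in (\beta,\alpha] \subset (\beta,1]$, applying that theorem with $\gamma$ in the role of $\alpha$ and an auxiliary exponent $\sigma' \in (0,\gamma-\beta)$ in the role of $\sigma$ furnishes a time $\tau = \tau(R,\gamma) > 0$ --- depending only on $R$, on $\gamma$, and on the H\"older data of $\Acal$, $\Fcal$, and crucially not on any norm of the datum stronger than $\|\cdot\|_{E_\gamma}$ --- such that every $w_0 \in E_\gamma$ with $\|w_0\|_{E_\gamma} \le R$ yields a solution $w$ of $\dot w + \Acal(w)w = \Fcal(w)$, $w(0)=w_0$, on $[0,\tau]$ lying in $C^{\mu\sigma'}((0,\tau];E_1) \cap C^{1+\mu\sigma'}((0,\tau];E_0) \cap C([0,\tau];E_\gamma)$. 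I will moreover use the following refinement, which is extracted from the fixed-point construction in the proof of Theorem \ref{th:existence_quasilinear_abstract} together with part (ii) of Theorem \ref{thm:existence_linear_Amann}: the linear subproblems solved in that construction all carry the fixed initial value $w_0$ (cf. \eqref{eq:LP_proof}), so if in addition $w_0 \in E_1$ then the resulting solution satisfies $w \in C([0,\tau];E_1) \cap C^1([0,\tau];E_0)$, i.e. it is continuous into $E_1$ (hence into $E_\alpha$) up to and including $t=0$.

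Now set $T_R(\gamma) := \tfrac12\bar T$ if $\tau(R,\gamma) \ge \bar T$, and $T_R(\gamma) := \bar T - \tfrac12\tau(R,\gamma)$ otherwise; in both cases $T_R(\gamma) \in (0,\bar T)$ and $T_R(\gamma) + \tau(R,\gamma) > \bar T$. Given the supposed solution $u$ and $t_0$, observe that $t_0 \ge T_R(\gamma) > 0$, so $u(t_0) \in E_1$ by $u \in C((0,T];E_1)$, and $\|u(t_0)\|_{E_\gamma} \le R$. Applying the restart result with $w_0 := u(t_0)$ gives a solution $w$ on $[t_0,t_0+\tau]$ with $w(t_0)=u(t_0)$ which, by the refinement, lies in $C([t_0,t_0+\tau];E_1) \cap C^1([t_0,t_0+\tau];E_0)$. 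Define $v := u$ on $[0,t_0]$ and $v := w$ on $[t_0,t_0+\tau]$. At $t_0$ the two pieces agree; both are continuous there into $E_1$ (from the left since $u \in C((0,T];E_1)$, from the right by the refinement), hence into $E_\alpha$; and both solve $\dot z = \Fcal(z) - \Acal(z)z$ with the common value $u(t_0)$, so their one-sided $E_0$-derivatives at $t_0$ coincide. Consequently $v$ solves \eqref{eq:QP} on $[0,t_0+\tau]$ with $v(0)=u_0$ and $v \in C([0,t_0+\tau];E_\alpha) \cap C((0,t_0+\tau];E_1) \cap C^1((0,t_0+\tau];E_0)$. Since $t_0+\tau \ge T_R(\gamma)+\tau(R,\gamma) > \bar T$, this contradicts the definition of $\bar T$, which proves the claim.

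I expect the second paragraph to be the crux and the precise spot where ``care is required'' in the non-unique setting: one needs a continuation time controlled by the weak norm $\|u(t_0)\|_{E_\gamma}$ alone, yet the glued function must be continuous in the strong $E_1$-topology at $t_0$, which an $E_\gamma$-restart does not by itself provide. The resolution used above is that the restart datum $u(t_0)$, being the value of a solution at a positive time, automatically belongs to $E_1$, so the desired strong continuity up to $t_0$ is supplied free of charge by the linear theory, while the length $\tau$ of the added interval still comes only from the $E_\gamma$-bound.
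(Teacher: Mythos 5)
Your proof is correct and follows essentially the same route as the paper's: a continuation-by-contradiction argument that restarts at $t_0$ via Theorem \ref{th:existence_quasilinear_abstract} applied with $\gamma$ in the role of $\alpha$ and $R$ in the role of $R'$, and uses the fact that $u(t_0)\in E_1$ (since $t_0>0$) together with the linear theory (Theorem \ref{thm:existence_linear_Amann}(ii)) to upgrade the restart solution to $C([0,\tau];E_1)\cap C^1([0,\tau];E_0)$ so the spliced function solves \eqref{eq:QP} across $t_0$. The only cosmetic difference is that you explicitly handle the corner case $\tau(R,\gamma)\ge\bar T$ to ensure $T_R(\gamma)>0$, which the paper glosses over.
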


\medskip

\begin{proof} Fix $\gamma \in (\beta, \alpha]$. Let $T_0 = T_0(\gamma, R)$ be the existence time in 
Theorem~\ref{th:existence_quasilinear_abstract} , with $\alpha$ replaced by $\gamma$ and
$R'$ replaced by $R$. We claim that the assertion of the theorem holds with
\begin{equation*}
	T_R(\gamma) = \bar T - \frac{T_0}{2}.
\end{equation*}
Indeed, assume that there exists a solution $u \in C([0,T];E_{\alpha})$ with 
\begin{equation*}
	\| u(\bar t)\|_{E_\gamma} \le R \quad \text{and} \quad T \ge \bar t \ge \bar T - \frac{T_0}{2}.
\end{equation*}
Then by Theorem~\ref{th:existence_quasilinear_abstract} there exists a solution 
\begin{equation*}
	U \in C([0, T_0]; E_\gamma) \cap C^{\frac{\gamma-\beta}{2}}([0, T_0]; E_\beta)
\end{equation*}
with initial value $U(0) = u(\bar t)$. Here we used Theorem~\ref{th:existence_quasilinear_abstract} 
with $\alpha = \gamma$ and $\alpha' = \frac{\beta + \gamma}{2}$.
Thus 
\begin{equation*}
	\mathcal A \circ U \in C^\rho([0,T_0]; \mathcal H(E_1;E_0)) 
	\quad 
	\text{and} 
	\quad \mathcal F \circ U \in C^{\rho}([0, T_0]; E_0) 
\end{equation*}
with $\rho = \mu \frac{\gamma-\beta}{2}$. Moreover $U(0) = u(\bar t) \in E_1$. 
Therefore the linear theory gives $U \in C^1([0, T_0];E_0) \cap C([0, T_0]; E_1)$.
Now define
\begin{equation*}
	\tilde{u}(t) =
	\begin{cases}
		u(t), & 0 \leq t < \bar t
		\\
		U(t - \bar t), & \bar t \leq t \leq \bar t + T_0.
	\end{cases}
\end{equation*}
Then $\tilde u \in C([0, \bar t + T_0]; E_\alpha) \cap  C((0, \bar t + T_0]; E_1)$.
The equation 
\begin{equation*}
	\dot{\tilde{u}} + \Acal(\tilde{u})\, \tilde{u} = \Fcal(\tilde{u})
\end{equation*}
holds in $(0, \bar t)$ and in $(\bar t, \bar t + T_0)$. Since $\tilde u \in   C((0, \bar t + T_0]; E_1)$ is follows that 
$\dot{\tilde{u}}$ can be uniquely continued at $t = \bar t$ to a continuous function with values in $E_0$. 
Indeed, thanks to the continuity of $\Acal$ and $\Fcal$ we have for $t > \bar{t}$
\begin{align*}
	0 
	&=
	\lim_{t \searrow \bar{t}} \partial_t^+ U(t-\bar{t}) + \Acal(U(t-\bar{t}))\, U(t-\bar{t}) - \Fcal(U(t - \bar{t}))
	\\
	&=
	\partial_t^+ U(0) + \Acal(U(0))\, U(0) - \Fcal(U(0))
	\\
	&=
	\partial_t^+ \tilde{u}(\bar{t}) + \Acal(\tilde{u}(\bar{t}))\, \tilde{u}(\bar{t}) - \Fcal(\tilde{u}(\bar{t}))
	\quad
	\text{in } E_0
\end{align*}
and for $t < \bar{t}$
\begin{align*}
	0
	&=
	\lim_{t \nearrow \bar{t}} \partial_t^- u(t) + \Acal(u(t))\, u(t) - \Fcal(u(t))
	\\
	&=
	\partial_t^- \tilde{u}(\bar{t}) + \Acal(\tilde{u}(\bar{t}))\, \tilde{u}(\bar{t}) - \Fcal(\tilde{u}(\bar{t}))
	\quad
	\text{in } E_0.
\end{align*}
Thus $\tilde u \in C([0, \bar t + T_0]; E_\alpha) \cap  C((0, \bar t + T_0]; E_1) \cap C^1((0, \bar t + T_0; E_0)$ 
and $\tilde u$ is a solution of $\dot{\tilde{u}} + \Acal(\tilde{u})\, \tilde{u} = \Fcal(\tilde{u})$
on $(0, \bar t + T_0)$. Since $\bar t + T_0 \ge \bar T + \frac{T_0}{2} > \bar T$
this contradicts the definition of $\bar T$. 
\end{proof}

\medskip


\section{Existence of solutions to the non-Newtonian thin-film equation} \label{sec:application_existence}

In this section we apply the abstract existence result Theorem \ref{th:existence_quasilinear_abstract}
to the non-Newtonian thin-film equation
\begin{equation} \label{eq:thin-film}
	\begin{cases}
		u_t +  a \bigl(u^3 \bigl[1 + |b u u_{xxx}|^{\alpha-1}\bigr] u_{xxx}\bigr)_x
		&=\quad
		0,
		\quad
		t > 0, \, x \in \Omega
		\\
		u_x = u_{xxx}
		&=\quad
		0,
		\quad
		t > 0, \, x \in \partial\Omega
		\\
		u(0,\cdot)
		&=\quad
		u_0(\cdot),
		\quad
		x \in \Omega.
	\end{cases}
\end{equation}

We first introduce some notation.
Using the identity 
\begin{equation*}
	 \bigl(u^3 \bigl[1 + |b u u_{xxx}|^{\alpha-1}\bigr] u_{xxx}\bigr)_x
	 =
	 u^3 \bigl(1 + \alpha |b u u_{xxx}|^{\alpha-1}\bigr) u_{xxxx} 
	 + 
	 3u^2 \bigl(1 + |\tilde{b} u u_{xxx}|^{\alpha-1}\bigr) u_x u_{xxx},
\end{equation*}
where $\tilde{b} = \sigma/\tau_{\ast}$, we may rewrite \eqref{eq:thin-film}
in the following way in non-divergence form:
\begin{equation} \label{eq:thin-film_A}
	\begin{cases}
		u_t + A(u,u_x,u_{xx},u_{xxx})  u_{xxxx}
		&=\quad
		F(u,u_x,u_{xx},u_{xxx}),
		\quad
		t > 0, \, x \in \Omega
		\\
		u_x = u_{xxx}
		&=\quad
		0,
		\quad
		t > 0, \, x \in \partial\Omega
		\\
		u(0,\cdot)
		&=\quad
		u_0(\cdot),
		\quad
		x \in \Omega,
	\end{cases}
\end{equation}
where
\begin{equation} \label{eq:A}
	A : (0,\infty)\times\R^3 \longrightarrow (0,\infty),
	\quad
	A(z_0,z_1,z_2,z_3) = a z_0^3 \bigl(1 + \alpha |b z_0 z_3|^{\alpha-1}\bigr)
\end{equation}
and
\begin{equation}\label{eq:F}
	F : \R^4 \longrightarrow \R,
	\quad
	F(z_0,z_1,z_2,z_3) = - 3 z_0^2 \bigl(1 + |\tilde{b} z_0 z_3|^{\alpha-1}\bigr) z_1 z_3.
\end{equation}
are $(\alpha-1)$-H\"older continuous.
Moreover, given a function $v : [0,T]\times \Omega \to \R$ we write
\begin{equation*}
	V(t,x) = (v,v_x,v_{xx},v_{xxx})
\end{equation*}
and we use the abbreviations
\begin{equation*}
	A_V(t) = (A\circ V)(t)
	\quad
	\text{and}
	\quad
	F_V(t) = (F\circ V)(t).
\end{equation*}


Constructing solutions of \eqref{eq:thin-film}, respectively \eqref{eq:thin-film_A}, 
naturally involves the following two challenges.
First, to be able to apply the abstract existence result Theorem \ref{th:existence_quasilinear_abstract}
we have to reformulate \eqref{eq:thin-film_A} as an abstract quasilinear Cauchy problem. In other words
we have to choose a suitable Banach space $E_0$ in which we study the problem and we have to define
the differential operator $\Acal$ properly. This means in particular that we have to define $\Acal$ such that
its domain
\begin{equation*}
	D(\Acal) = \{u \in E_0; \Acal(v)u \in E_0\ \forall v \in E_{\beta};\ u_x = u_{xxx} = 0 \text{ on } \partial\Omega\}
\end{equation*}
incorporates the first- and third-order Neumann boundary conditions. Of course we need that $\Acal$ generates an 
analytic semigroup on $E_0$ and that $\Acal$ and $\Fcal$ satisfy the required regularity properties. Moreover,
$(E_1,E_0)$ has to be a densely and compactly injected Banach couple.


The second challenge we have to deal with is that the non-Newtonian thin-film equation is reasonable 
for positive film heights $u$ only. Hence, in order to apply Theorem \ref{th:existence_quasilinear_abstract}
we extend problem \eqref{eq:thin-film_A} in a way such that for positive initial data solutions of the extended 
problem coincide for a short time with solutions of the original problem.

To tackle the latter challenge we extend the coefficient map $A$ to a globally defined locally H\"older 
continuous function as follows. For $v^+ = \max(v,0)$ we first introduce the map
\begin{equation*}
	\bar{A}: \R^4 \longrightarrow [0,\infty),
	\quad
	\bar{A}(z_0,z_1,z_2,z_3) = A(z_0^+,z_1,z_2,z_3).
\end{equation*}
Note that the function $v \mapsto v^+$ is locally Lipschitz-continuous and hence we still have
$\bar{A} \in C^{\alpha-1}_{\text{loc}}\bigl(\R^4\bigr)$.
Finally, let $\eps > 0$ be given. To ensure parabolicity of the coefficient map we set
\begin{equation*}
	\bar{A}_{\eps} : \R^4 \longrightarrow (\eps/2,\infty),
	\quad
	\bar{A}_{\eps}(z_0,z_1,z_2,z_3) = \max\bigl(\bar{A}(z_0,z_1,z_2,z_3),\eps/2\bigr).
\end{equation*}
Summarising, we have that the maps $A, \bar{A}, \bar{A}_{\eps}$ and $F$ are locally 
$(\alpha-1)$-H\"older continuous on $\R^4$, in symbols
\begin{equation*}
	\bar{A}_{\eps} \in C^{\alpha-1}_{\text{loc}}\bigl(\R^4;(0,\infty)\bigr),
	F \in C^{\alpha-1}_{\text{loc}}\bigl(\R^4;(0,\infty)\bigr),
\end{equation*}
(and analogously for $A$ and $\bar{A}$). That is,
for all $z, z' \in \R^4$ with $|z|, |z'| \leq R$ they satisfy
\begin{equation} \label{eq:A_F_loc_Holder}
	|\bar{A}_{\eps}(z) - \bar{A}_{\eps}(z')|
	\leq
	C_R |z - z'|^{\alpha - 1}
	\quad
	\text{and}
	\quad
	|F(z) - F(z')|
	\leq
	C_R |z - z'|^{\alpha - 1}
\end{equation}
(and analogously for $A$ and $\bar{A}$).

As above we finally introduce the notation
\begin{equation*}
	\bar{A}_{\eps,V}(t) = (\bar{A}_{\eps}\circ V)(t).
\end{equation*}
The corresponding global version of \eqref{eq:thin-film_A} then reads
\begin{equation} \label{eq:thin-film_A_global}
	\begin{cases}
		u_t + \bar{A}_{\eps}(u,u_x,u_{xx},u_{xxx})\, u_{xxxx}
		&=\quad
		F(u,u_x,u_{xx},u_{xxx}),
		\quad
		t > 0, \, x \in \Omega
		\\
		u_x = u_{xxx}
		&=\quad
		0,
		\quad
		t > 0, \, x \in \partial\Omega
		\\
		u(0,\cdot)
		&=\quad
		u_0(\cdot),
		\quad
		x \in \Omega.
	\end{cases}
\end{equation}

The task of setting up an appropriate framework for the abstract Cauchy problem in terms
of function spaces is addressed in the following two subsections. 


\subsection{Solutions to \eqref{eq:thin-film} in fractional Sobolev spaces} \label{sec:Sobolev}

In this section we study the problem of existence of solutions to \eqref{eq:thin-film},
respectively \eqref{eq:thin-film_A}, which are H\"older continuous in time and take values 
in Sobolev spaces of fractional order. Note that we consider only the case in which
$\Omega \subset \R$ is a bounded interval.

For $k \in \N$ and $p \in [1,\infty)$ we denote by $W^k_p(\Omega)$ the usual Sobolev 
spaces with norm
\begin{equation*}
	||v||_{W^k_p(\Omega)} = \left(\sum_{j=0}^k ||\partial^{j} v||_{L_p(\Omega)}^p\right)^{1/p}.
\end{equation*}
We then put
\begin{equation*}
	[v]_{W^s_p(\Omega)}
	=
	\int_{\Omega} \int_{\Omega} \frac{|v(x) - v(z)|^p}{|x-z|^{1+sp}}\, dx\, dz,
	\quad
	1 \leq p < \infty,\ 0 < s < 1,
\end{equation*}
and define the \textbf{Sobolev--Slobodeckii} or \textbf{fractional Sobolev spaces} by
\begin{equation*}
	W^s_p(\Omega) =
	\left\{v \in W^{[s]}_p(\Omega); ||v||_{W^s_p(\Omega)} < \infty\right\},
	\quad
	1\leq p < \infty,\ s \in \R_+\setminus\N,
\end{equation*}
where
\begin{equation*}
	||v||_{W^s_p(\Omega)} = \left(||v||_{W^{[s]}_p(\Omega)}^p + [\partial^{[s]}v]_{W^{s-[s]}_p(\Omega)}^p\right)^{1/p},
	\quad
	1\leq p < \infty,\ s \in \R_+\setminus\N.
\end{equation*}
Here $[s]$ denotes the largest integer smaller than or equal to $s$.

We now recall some important properties of these spaces which are necessary to guarantee that
we are in the setting of Theorem \ref{th:existence_quasilinear_abstract}. It is well-known that 
for $-\infty < s_0 < s_1 < \infty$ and $0 < \rho < 1$ the space $W^s_p(\Omega), 
s = (1 - \rho) s_0 + \rho s_1$, is the complex interpolation space between $W^{s_1}_p(\Omega)$ and 
$W^{s_0}_p(\Omega)$, in symbols
\begin{equation*}
	W^s_p(\Omega) = [W^{s_0}_p(\Omega),W^{s_1}_p(\Omega)]_{\rho}.
\end{equation*}

In order to take the (Neumann) boundary conditions of \eqref{eq:thin-film}, respectively
\eqref{eq:thin-film_A}, into account we further introduce the Banach spaces
\begin{equation*}
	W^{4\rho}_{p,B}(\Omega)
	=
	\begin{cases}
		\left\{v \in W^{4\rho}_p(\Omega); 
		v_x = v_{xxx} = 0 \text{ on } \partial\Omega\right\}, 
		& 3 + \frac{1}{p} < 4\rho \leq 4
		\\[1ex]
		\left\{v \in W^{4\rho}_p(\Omega); 
		v_x = 0 \text{ on } \partial\Omega\right\}, 
		& 1+ \frac{1}{p} < 4\rho \leq 3 + \frac{1}{p}
		\\[1ex] 
		W^{4\rho}_p(\Omega), & 0 \leq 4\rho \leq 1+ \frac{1}{p}.
	\end{cases}
\end{equation*}
For $4\rho \in (0,4)\setminus\{1+1/p,3+1/p\}$ the spaces $W^{4\rho}_{p,B}(\Omega)$ are closed
linear subspaces of $W^{4\rho}_{p}(\Omega)$ and satisfy the interpolation property
\begin{equation*}
	W^{4\rho}_{p,B} = \bigl(L_p,W^4_{p,B}(\Omega)\bigr)_{\rho,p},
	\quad
	1 < p < \infty.
\end{equation*}


We can now apply the abstract existence result Theorem \ref{th:existence_quasilinear_abstract}
to the non-Newtonian thin-film equation \eqref{eq:thin-film_A}. More precisely, we prove the following theorem
on the existence of solutions in Sobolev spaces of fractional order.


\begin{theorem} \label{th:existence_Sobolev}
Let $p \in (1,\infty)$ and $1/p < s < r < 1$. Moreover, let $\sigma = \frac{3+s}{4}$ and 
$\rho = \frac{3+r}{4}$. Then, given an initial film height 
$u_0 \in W^{4\rho}_{p,B}(\Omega)$ such that $u_0(x) > 0\,$ for all $x \in \bar{\Omega}$, 
for each $\alpha \in (1,2)$ there exists a positive $T > 0$ and
a solution $u$ of \eqref{eq:thin-film} on $[0,T]$ in the sense that
\begin{equation*}
	u \in C\bigl([0,T],W^{4\rho}_{p,B}(\Omega)\bigr)
	\cap
	C^{\rho}\bigl([0,T],L_p(\Omega)\bigr)
	\cap
	C\bigl((0,T],W^4_{p,B}(\Omega)\bigr)
	\cap
	C^1\bigl((0,T],L_p(\Omega)\bigr)
\end{equation*}
and
\begin{equation*}
	u(t,x) > 0,
	\quad
	(t,x) \in [0,T]\times\bar{\Omega}.
\end{equation*}
\end{theorem}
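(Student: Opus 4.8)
The plan is to recast the truncated problem \eqref{eq:thin-film_A_global} as an abstract quasilinear Cauchy problem \eqref{eq:QP}, apply Theorem~\ref{th:existence_quasilinear_abstract}, and then remove the truncation on a short time interval using the positivity of $u_0$.

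First I would fix the functional-analytic framework: $E_0 := L_p(\Omega)$ and $E_1 := W^4_{p,B}(\Omega)$. Since $\Omega\subset\R$ is a bounded interval, $W^4_p(\Omega)\xhookrightarrow{c}L_p(\Omega)$ by Rellich--Kondrachov, and smooth functions vanishing together with their odd derivatives near $\partial\Omega$ are dense in $L_p$; hence $(E_0,E_1)$ is a densely and compactly injected Banach couple, as required. By the interpolation identity recalled above, $E_\theta=W^{4\theta}_{p,B}(\Omega)$ for $\theta\in(0,1)$ with $4\theta\notin\{1+1/p,\,3+1/p\}$; in particular $E_\sigma=W^{3+s}_{p,B}(\Omega)$ and $E_\rho=W^{3+r}_{p,B}(\Omega)$ with $0<\sigma<\rho<1$ because $1/p<s<r<1$. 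The hypothesis $s>1/p$ is exactly what yields the Sobolev embedding $E_\sigma\hookrightarrow C^3(\bar\Omega)$, so that for $v\in E_\sigma$ the tuple $V=(v,v_x,v_{xx},v_{xxx})$ is a well-defined element of $C(\bar\Omega;\R^4)$ depending linearly and boundedly on $v$.

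Next, fix $m_0:=\tfrac12\min_{\bar\Omega}u_0>0$ (finite and positive since $u_0\in E_\rho\hookrightarrow C^3(\bar\Omega)$ and $u_0>0$), and choose the truncation parameter $\epsilon:=a\,m_0^3$. Define $\Acal:E_\sigma\to\Lcal(E_1;E_0)$ by $\Acal(v)w:=\bar A_\epsilon(V)\,\partial_x^4 w$ and $\Fcal:E_\sigma\to E_0$ by $\Fcal(v):=F(V)$. That $\Acal(v)\in\Hcal(E_1;E_0)$ for every $v$ amounts to saying that the fourth-order operator $w\mapsto\bar A_\epsilon(V)\partial_x^4 w$ with the Neumann-type boundary conditions $w_x=w_{xxx}=0$ on $\partial\Omega$ generates a strongly continuous analytic semigroup on $L_p(\Omega)$; since $\bar A_\epsilon(V)\in C(\bar\Omega)$ with $\bar A_\epsilon(V)\ge\epsilon/2>0$, this boundary value problem is regularly elliptic and the claim follows from the standard theory of elliptic operators, e.g. \cite{A:1995}. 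For the $\mu$-H\"older continuity on balls with $\mu=\alpha-1\in(0,1)$ I would compose the bounded linear map $v\mapsto V\in C(\bar\Omega;\R^4)$ with the locally $(\alpha-1)$-H\"older maps $\bar A_\epsilon$ and $F$: on a ball $B_\sigma(0,\hat R)$ the tuples $V$ stay in a fixed ball of $C(\bar\Omega;\R^4)$, so by \eqref{eq:A_F_loc_Holder} one gets $\|\bar A_\epsilon(V)-\bar A_\epsilon(\tilde V)\|_{C(\bar\Omega)}\le C\|v-\tilde v\|_{E_\sigma}^{\alpha-1}$, and since multiplication by an $L_\infty$-function has operator norm on $L_p$ equal to its sup-norm and $\partial_x^4:E_1\to E_0$ is bounded, $\|\Acal(v)-\Acal(\tilde v)\|_{\Lcal(E_1;E_0)}\le C\|v-\tilde v\|_{E_\sigma}^{\alpha-1}$; likewise for $\Fcal$.

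With these ingredients Theorem~\ref{th:existence_quasilinear_abstract} applies, the roles of its $\beta,\alpha$ played by $\sigma,\rho$, its H\"older exponent by $\mu=\alpha-1$, and any auxiliary exponent $\sigma'\in(0,\rho-\sigma)$: with $R':=\|u_0\|_{W^{4\rho}_{p,B}}$ it produces a time $T>0$ and a solution $u$ of $\dot u+\Acal(u)u=\Fcal(u)$, $u(0)=u_0$, satisfying
\[
u\in C^{\mu\sigma'}\bigl((0,T];E_1\bigr)\cap C^{1+\mu\sigma'}\bigl((0,T];E_0\bigr)\cap C\bigl([0,T];E_\rho\bigr).
\]
Unravelling the definitions, $u$ solves \eqref{eq:thin-film_A_global} with $u\in C([0,T];W^{4\rho}_{p,B})\cap C((0,T];W^4_{p,B})\cap C^1((0,T];L_p)$, and since $4\rho>3+1/p$ the boundary conditions $u_x=u_{xxx}=0$ hold for all $t\in[0,T]$. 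For the remaining H\"older-in-time statement, note that $u$ also solves the \emph{linear} problem $\dot u+\Acal(u(t))u=\Fcal(u(t))$, whose coefficient $t\mapsto\Acal(u(t))$ lies in $C^{\mu\sigma'}([0,T];\Lcal(E_1;E_0))$ with bounded seminorm, stays in a fixed $\Hcal(E_1,E_0,\kappa,\omega)$ after a shift, while $\Fcal(u(\cdot))\in L_\infty([0,T];E_0)$; hence Theorem~\ref{thm:existence_linear_Amann}(iv), applied with its $\beta=0$ and $\alpha=\rho<1$, gives $u\in C^\rho([0,T];L_p(\Omega))$, which is the last asserted regularity. Finally, because $u\in C([0,T];W^{4\rho}_{p,B})\hookrightarrow C([0,T]\times\bar\Omega)$ and $u(0,\cdot)=u_0\ge 2m_0$, there is $T'\in(0,T]$ with $u\ge m_0$ on $[0,T']\times\bar\Omega$; there $u^+=u$ and $A(u,u_x,u_{xx},u_{xxx})\ge a u^3\ge a m_0^3=\epsilon\ge\epsilon/2$, so $\bar A_\epsilon(u,\dots)=A(u,\dots)$ and $u$ solves the untruncated \eqref{eq:thin-film_A} on $[0,T']$; by the product-rule identity preceding \eqref{eq:thin-film_A} (valid since $u(t,\cdot)\in W^4_p(\Omega)$ and $u,u_x,u_{xx},u_{xxx}$ are continuous up to $\partial\Omega$) this is a solution of \eqref{eq:thin-film} on $[0,T']$ with $u>0$ throughout. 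Renaming $T'$ to $T$ completes the argument.

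I expect the main obstacle to be the generation result in the second step — pinning down the precise form of the analytic-semigroup (equivalently, $L_p$-maximal-regularity) statement for a one-dimensional fourth-order operator with merely continuous but uniformly positive leading coefficient under the two Neumann-type boundary conditions — together with the bookkeeping needed to be sure that the interpolation spaces $E_\theta$ are exactly the $W^{4\theta}_{p,B}(\Omega)$ carrying the correct boundary conditions over the whole range of indices used. By comparison, the H\"older continuity of the Nemytskii maps and the de-truncation/positivity step are routine.
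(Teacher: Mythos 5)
Your proposal is correct and follows essentially the same route as the paper: set $E_0=L_p(\Omega)$, $E_1=W^4_{p,B}(\Omega)$, verify that the truncated coefficient $\bar A_\eps$ makes $\bar\Acal_\eps$ normally elliptic so that it generates an analytic semigroup, check $(\alpha-1)$-H\"older continuity of $\bar\Acal_\eps$ and $\Fcal$ on balls of $W^{4\sigma}_{p,B}(\Omega)$ via the continuous embedding into $C^3(\bar\Omega)$, apply Theorem~\ref{th:existence_quasilinear_abstract}, and remove the truncation on a short interval using the time-continuity and positivity of $u_0$. One minor refinement in your write-up compared to the paper's: Theorem~\ref{th:existence_quasilinear_abstract} requires $\beta>0$ strictly, so it only delivers $u\in C^{\alpha'-\beta}([0,T];E_\beta)$ for exponents strictly less than $\rho$; you obtain the stated $u\in C^\rho([0,T];L_p(\Omega))$ by going back to the linear Theorem~\ref{thm:existence_linear_Amann}(iv) with $\beta=0$ applied to the frozen-coefficient problem, whereas the paper records only $C^\nu([0,T];W^{4\sigma}_{p,B})$ with $\nu<\rho-\sigma$ and leaves the $C^\rho$-in-$L_p$ claim implicit. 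The remaining differences (choosing $\eps=a m_0^3$ with $m_0=\tfrac12\min u_0$ rather than assuming $u_0\geq 2(\eps/(2a))^{1/3}$ for fixed $\eps$) are cosmetic.
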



In order to prove Theorem \ref{th:existence_Sobolev} we have to verify that the conditions 
of the abstract result Theorem \ref{th:existence_quasilinear_abstract} are satisfied. 
To this end we make the choice
\begin{equation*}
	E_0 = L_p(\Omega),
	\quad
	E_1 = W^4_{p,B}(\Omega).
\end{equation*}
For this choice it is well-known that
\begin{equation*}
	W^4_{p,B}(\Omega) \xhookrightarrow{d} L_p(\Omega)
	\quad
	\text{and}
	\quad
	W^4_{p,B}(\Omega) \xhookrightarrow{c} L_p(\Omega),
	\quad
	1 < p < \infty.
\end{equation*}
Denoting by $E_{\rho}=W^{4\rho}_{p,B}(\Omega), 
4\rho \in (0,4)\setminus\{1+1/p,3+1/p\}$, 
the respective complex interpolation spaces,
this implies (see for instance \cite[Thm. I.2.11.1]{A:1995}) that also
\begin{equation*}
	W^{4\rho}_{p,B}(\Omega) \xhookrightarrow{d} 
	W^{4\sigma}_{p,B}(\Omega) 
	\quad
	\text{and}
	\quad
	W^{4\rho}_{p,B}(\Omega) \xhookrightarrow{c} 
	W^{4\sigma}_{p,B}(\Omega),
	\quad
	0 \leq \sigma < \rho \leq 1.
\end{equation*}


Based on that we view the evolution equation $\eqref{eq:thin-film_A}_1$ in non-divergence form
as an abstract quasilinear Cauchy problem in the following way. Let $p \in (1,\infty)$ and $s > 1/p$.
For $v \in W^{4\sigma}_{p,B}(\Omega)$ with $\sigma = \frac{3+s}{4}$ such that $v(x) > 0$ for all
$x \in \bar{\Omega}$ we associate to \eqref{eq:thin-film_A}
the linear differential operator 
\begin{equation} \label{eq:operator}
	\Acal(v(t)) \in \Lcal\bigl(W^4_{p,B}(\Omega);L_p(\Omega)\bigr),
	\quad
	\Acal(v(t)) u := A_V(t) \partial_x^4 u
\end{equation}
of fourth order. Then, with
\begin{equation} \label{eq:rhs}
	\Fcal(v(t)) =
	-3a \bigl(v^2 v_x v_{xxx} + \tilde{b}^{\alpha-1} v^{\alpha+1} v_x |v_{xxx}|^{\alpha-1} v_{xxx}\bigr)
\end{equation}
we rewrite \eqref{eq:thin-film_A} as
\begin{equation} \label{eq:CP_Sobolev}
	\begin{cases}
		\dot{u} + \Acal(u) u &=\quad \Fcal(u),
		\quad t > 0
		\\
		u(0) &=\quad u_0.
	\end{cases}
\end{equation}


Similarly we rewrite the extended problem \eqref{eq:thin-film_A_global} as
\begin{equation} \label{eq:CP_Sobolev_global}
	\begin{cases}
		\dot{u} + \bar{\Acal}_{\eps}(u) u &=\quad \Fcal(u),
		\quad t > 0
		\\
		u(0) &=\quad u_0,
	\end{cases}
\end{equation}
where
\begin{equation} \label{eq:operator_global}
	\bar{\Acal}_{\eps}(v(t)) \in \Lcal\bigl(W^4_{p,B}(\Omega);L_p(\Omega)\bigr),
	\quad
	\bar{\Acal}_{\eps}(v(t)) u := \bar{A}_{\eps,V}(t) \partial_x^4 u
\end{equation}


In the following lemmas we study the relevant regularity properties 
of the differential operator $\bar{\Acal}_{\eps}$ and the right-hand side $\Fcal$, introduced 
\eqref{eq:operator_global}, respectively \eqref{eq:rhs}.


\begin{lemma} \label{lem:A_F_Holder}
Given $p \in (1,\infty)$ and $1/p < s < 1$, let $\sigma = \frac{3+s}{4}$. Then
for all flow behaviour exponents $\alpha \in (1,2)$ the mappings
\begin{equation*}
	\bar{\Acal}_{\eps} : W^{4\sigma}_{p,B}(\Omega) \to \Lcal\bigl(W^4_{p,B}(\Omega);L_p(\Omega)\bigr)
	\quad
	\text{and}
	\quad
	\Fcal : W^{4\sigma}_{p,B}(\Omega) \longrightarrow L_p(\Omega),
\end{equation*}
are $(\alpha-1)$-H\"older continuous on bounded balls
in the sense that
\begin{equation*}
	||\bar{\Acal}_{\eps}(v) - \bar{\Acal}_{\eps}(w)||_{\Lcal(W^4_{p,B}(\Omega),L_p(\Omega))}
	\leq
	C_R
	||v - w||_{W^{4\sigma}_{p,B}(\Omega)}^{\alpha-1}
	\quad
	\text{and}
	\quad
	||\Fcal(v) - \Fcal(w)||_{L_p(\Omega)} 
	\leq 
	C_R 
	||v - w||_{W^{4\sigma}_{p,B}(\Omega)}^{\alpha-1}
\end{equation*}
for all $v, w \in W^{4\sigma}_{p,B}(\Omega)$ with 
$||v||_{W^{4\sigma}_{p,B}(\Omega)},||w||_{W^{4\sigma}_{p,B}(\Omega)} \leq R$. 
\end{lemma}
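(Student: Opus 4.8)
The plan is to reduce the whole statement to pointwise estimates in $\R^4$, using a Sobolev embedding into $C^3(\bar\Omega)$ together with the local $(\alpha-1)$-Hölder bounds \eqref{eq:A_F_loc_Holder} already recorded for $\bar A$, $\bar A_{\eps}$ and $F$.

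First I would set up the embedding. Since $4\sigma = 3+s$ with $s > 1/p$, the Sobolev--Slobodeckii embedding theorem (for the bounded interval $\Omega$) gives $W^{4\sigma}_p(\Omega)\hookrightarrow C^3(\bar\Omega)$, and a fortiori $W^{4\sigma}_{p,B}(\Omega)\hookrightarrow C^3(\bar\Omega)$; let $c_e=c_e(p,s,\Omega)$ denote the embedding constant. Consequently, if $\|v\|_{W^{4\sigma}_{p,B}(\Omega)}\le R$ then the vector field $V=(v,v_x,v_{xx},v_{xxx})$ satisfies $\|V\|_{C(\bar\Omega;\R^4)}\le c_e R=:M$, i.e. $V(x)$ ranges over the ball $\{|z|\le M\}$ in $\R^4$ for all $x\in\bar\Omega$. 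This is precisely the point where the hypothesis $s>1/p$, equivalently $4\sigma>3+1/p$, is needed, since it is what allows one to control $v_{xxx}$ in the sup norm.

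Next I would estimate the operator. For $u\in W^4_{p,B}(\Omega)$ and $v,w$ in the ball of radius $R$, write
\[
\|(\bar{\Acal}_{\eps}(v)-\bar{\Acal}_{\eps}(w))u\|_{L_p(\Omega)}
=\|(\bar A_{\eps}\circ V-\bar A_{\eps}\circ W)\,\partial_x^4 u\|_{L_p(\Omega)}
\le\|\bar A_{\eps}\circ V-\bar A_{\eps}\circ W\|_{L_\infty(\Omega)}\,\|u\|_{W^4_{p,B}(\Omega)},
\]
and apply \eqref{eq:A_F_loc_Holder} pointwise with radius $M$, followed by the embedding once more for $v-w$:
\[
\|\bar A_{\eps}\circ V-\bar A_{\eps}\circ W\|_{L_\infty(\Omega)}
\le C_M\,\sup_{x\in\bar\Omega}|V(x)-W(x)|^{\alpha-1}
\le C_M\,c_e^{\alpha-1}\,\|v-w\|_{W^{4\sigma}_{p,B}(\Omega)}^{\alpha-1}.
\]
Taking the supremum over $u$ in the unit ball of $W^4_{p,B}(\Omega)$ gives the claimed bound on $\bar{\Acal}_{\eps}$ with $C_R:=C_M c_e^{\alpha-1}$. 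The estimate for $\Fcal$ is identical in spirit: writing $\Fcal(v)=c\,(F\circ V)$ with $F$ the map of \eqref{eq:F} and using that $\Omega$ is bounded, hence $L_\infty(\Omega)\hookrightarrow L_p(\Omega)$,
\[
\|\Fcal(v)-\Fcal(w)\|_{L_p(\Omega)}
\le c\,|\Omega|^{1/p}\,\|F\circ V-F\circ W\|_{L_\infty(\Omega)}
\le c\,|\Omega|^{1/p}\,C_M\,c_e^{\alpha-1}\,\|v-w\|_{W^{4\sigma}_{p,B}(\Omega)}^{\alpha-1}.
\]
There is essentially no obstacle here: the result is a soft consequence of the embedding $W^{3+s}_p(\Omega)\hookrightarrow C^3(\bar\Omega)$ for $s>1/p$ and of \eqref{eq:A_F_loc_Holder}. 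The only care required is (a) verifying $4\sigma>3+1/p$, and (b) bookkeeping of constants, in particular that the local Hölder constant must be taken on the ball of radius $M=c_eR$ in $\R^4$, not on the ball of radius $R$ in $W^{4\sigma}_{p,B}(\Omega)$. That $\bar A$ and $\bar A_{\eps}$ inherit local $(\alpha-1)$-Hölder continuity from $A$ (because $z\mapsto z^+$ and $t\mapsto\max(t,\eps/2)$ are Lipschitz) has already been noted before the lemma and may simply be cited.
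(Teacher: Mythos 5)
Your proof is correct and follows the same route the paper takes; the paper's own proof is just the one line ``This follows from \eqref{eq:A_F_loc_Holder}'', and your write-up supplies exactly the missing details, namely the embedding $W^{4\sigma}_p(\Omega)=W^{3+s}_p(\Omega)\hookrightarrow C^3(\bar\Omega)$ for $s>1/p$, the reduction of the operator norm bound to an $L_\infty$ estimate of the coefficient, and the pointwise application of the local Hölder bound on the ball of radius $M=c_eR$ in $\R^4$. The bookkeeping point you flag (that $C_R$ must come from the Hölder seminorm on $\{|z|\le c_eR\}$ in $\R^4$, not the ball of radius $R$ in the Sobolev space) is precisely the right thing to be careful about, and the $L_\infty\hookrightarrow L_p$ step for $\Fcal$ is fine on the bounded interval $\Omega$.
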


\medskip

\begin{proof}
This follows from \eqref{eq:A_F_loc_Holder}.
\end{proof}

\medskip

We can now prove the main result of this section.

\medskip

\begin{proof}[\textbf{Proof of Theorem \ref{th:existence_Sobolev}}]
(i) Existence. Let $p \in (1,\infty)$ and $1/p < s < r < 1$. Then put $\sigma = \frac{3+s}{4}$ 
and $\rho = \frac{3+r}{4}$.
Suppose that $u_0 \in W^{4\rho}_{p,B}$ such that 
\begin{equation*}
	||u_0||_{W^{4\rho}_{p,B}(\Omega)} < R'
	\quad
	\text{and}
	\quad
	u_0(x) \geq 2\Bigl(\frac{\eps}{2a}\Bigr)^{1/3} > 0
	\quad 
	\forall x \in \bar{\Omega},
\end{equation*}
where $\eps, R' > 0$ are fixed.

We first apply Theorem \ref{th:existence_quasilinear_abstract} to show that problem
\eqref{eq:CP_Sobolev_global} possesses a solution for some time $T_{\eps} > 0$. 
To this end note that in view of Lemma \ref{lem:A_F_Holder}  we
have the required H\"older continuity of the operator $\bar{\Acal}_{\eps}$ and the right-hand side;
\begin{equation} \label{eq:Sobolev_reg}
	\begin{split}
	\bar{\Acal}_{\eps} &: W^{4\sigma}_{p,B}(\Omega) \longrightarrow \Lcal\bigl(W^4_{p,B}(\Omega);L_p(\Omega)\bigr)
	\\
	\Fcal &: W^{4\sigma}_{p,B}(\Omega) \longrightarrow L_p(\Omega).
	\end{split}
\end{equation}
Note that here we used that $\bar{\Acal}_{\eps}$ is the composition of $\Acal$ with two Lipschitz continuous maps.
Moreover, recall that the choice of $\sigma= \frac{3+s}{p}$ with $s > 1/p$ implies that 
$W^{4\sigma}_{p,B}(\Omega) \hookrightarrow C(\bar{\Omega})$, whence for 
$v \in W^{4\sigma}_{p,B}(\Omega)$
\begin{equation*}
	A_V = a v^3\bigl(1+\alpha |b v v_{xxx}|^{\alpha-1}\bigr) \in C(\bar{\Omega})
\end{equation*}
and thus finally 
\begin{equation*}
	\bar{A}_{\eps} = \max\bigl(A((v)_+,v_x,v_{xx},v_{xxx}),\eps/2\bigr) \in C(\bar{\Omega}).
\end{equation*}
In addition the principal symbol $a_{\eps}(x,\xi)$ of the operator $\Acal_{\eps}(v)$ 
satisfies the uniform Legendre--Hadamard condition
\begin{equation*}
	\text{Re}\bigl(a_{\eps}(x,\xi) \eta | \eta\bigr) \geq \frac{\eps}{2} (i \xi)^4 \eta^2 > 0
\end{equation*}
for $(x,\xi) \in \bar{\Omega}\times\{-1,1\} \text{ and } \eta \in \R\setminus\{0\}$. 
Thus $-\bar{\Acal}_{\eps}(v)$ with the given boundary conditions is normally elliptic in the sense of
\cite[Example 4.3(d)]{A:1993}. Thanks to \cite[Thm. 4.1 \& Rem. 4.2(b)]{A:1993} we conclude that 
\begin{equation} \label{eq:Sobolev_Hcal}
	\bar{\Acal}_{\eps}(v) \in \Hcal\bigl(W^4_{p,B}(\Omega),L_p(\Omega)\bigr)
\end{equation}
i.e. $-\bar{\Acal}_{\eps}(v)$ generates an analytic semigroup on $L_p(\Omega)$. In virtue of 
\eqref{eq:Sobolev_reg} and \eqref{eq:Sobolev_Hcal} we may eventually apply Theorem 
\ref{th:existence_quasilinear_abstract} to conclude that there exists a positive time $T_{\eps}$ and
a solution 
\begin{equation*}
	u_{\eps} \in C\bigl([0,T];W^{4\rho}_{p,B}(\Omega)\bigr) 
	\cap
	C^{\nu}\bigl([0,T];W^{4\sigma}_{p,B}(\Omega)\bigr), 
\end{equation*}
with $\nu \in (0,\rho-\sigma)$, to the extended problem \eqref{eq:CP_Sobolev_global}.
\medskip

(ii) Positivity. 
As above we denote by $\bar{u}_0$ the constant extension of $u_0$ on $[0,T]$. 
Now if $u_0(x) \geq 2\bigl(\frac{\eps}{2a}\bigr)^{1/3}$ for all $x \in \bar{\Omega}$ we find that
\begin{equation*}
	\min_{t \in [0,T]} u_{\eps}(t,x)
	\geq
	2\Bigl(\frac{\eps}{2a}\Bigr)^{1/3} - CT^{\nu}
\end{equation*}
for all $x \in \bar{\Omega}$. Hence 
\begin{equation} \label{eq:positivity}
	u_{\eps}(t,x) > \Bigl(\frac{\eps}{2a}\Bigr)^{1/3},
	\quad
	(t,x) \in [0,T]\times\bar{\Omega}
\end{equation}
for $T < \bigl(\frac{1}{C}(\frac{\eps}{2a})^{1/3}\bigr)^{1/\nu}$. 
\medskip

(iii) It remains to show that the solution $u_{\eps}$ is -- at least for a short time -- also a solution to
\eqref{eq:CP_Sobolev}. Indeed, by \eqref{eq:positivity} we obtain
\begin{equation*}
	\min_{t \in [0,T]} A_{U_{\eps}}(t) 
	\geq
	\min_{t \in [0,T]}
	a\, u_{\eps}(t)^3
	\geq
	\frac{\eps}{2}
\end{equation*}
for $T < \bigl(\frac{1}{C}(\frac{\eps}{2a})^{1/3}\bigr)^{1/\nu}$. This implies that 
there exists a positive time $T^{\ast}$ such that
\begin{equation*}
	A_{U_{\eps}}(t) = \bar{A}_{\eps,U_{\eps}}(t),
	\quad
	t \in [0,T^{\ast}],
\end{equation*}
and hence $u_{\eps}$ does also solve the original problem \eqref{eq:CP_Sobolev} on $[0,T^{\ast}]$.
This completes the proof.
\end{proof}

\medskip

\begin{remark}
It is worthwhile to discuss again the qualitative differences originating from the 
different values for the flow behaviour exponent $\alpha$.
Note that for $\alpha \geq 2$ Lemma \ref{lem:A_F_Holder} can be
improved to Lipschitz continuity in the appropriate norms. Indeed, recall that $W^s_p(\Omega)$
is a Banach algebra and that the Nemitskii operator induced by the function $f(z) = |z|^{\alpha-1}$
acts on $W^s_p(\Omega)$ for $\alpha \geq 2$, see for instance \cite[Thm. 4.6.4/2]{RS:1996}, respectively 
\cite[Thm. 5.4.3/1]{RS:1996}. Together with the inequality
\begin{equation*}
	|\, |z|^{\beta} - |y|^{\beta}| \leq C_{\beta} \bigl(|z|^{\beta-1} + |y|^{\beta-1}\bigr) |z-y|,
	\quad
	\beta \geq 1,
\end{equation*}
this makes the following calculation possible. For $p \in (1,\infty)$ and $s > 1/p$ we have
\begin{align*}
	& 
	||(v^{\alpha+2} |v_{xxx}|^{\alpha-1} - w^{\alpha+2} |w_{xxx}|^{\alpha-1}) u_{xxxx}||_{L_p(\Omega)} 
	\\
	& \leq
	C
	\Bigl(
	||(v^{\alpha+2} - w^{\alpha+2}) |v_{xxx}|^{\alpha-1}||_{L_p(\Omega)} 
	+
	||w^{\alpha+2} (|v_{xxx}|^{\alpha-1} - |w_{xxx}|^{\alpha-1})||_{L_p(\Omega)}
	\Bigr) ||u||_{W^4_p(\Omega)}
	\\
	& \leq
	C_{\alpha} 
	\Bigl(||\,|v_{xxx}|^{\alpha-1}||_{W^s_p(\Omega)} ||v^{\alpha+2} - w^{\alpha+2}||_{L_{\infty}(\Omega)}
	+
	||w^{\alpha+2}||_{W^s_p(\Omega)} ||\,|v_{xxx}|^{\alpha-1} - |w_{xxx}|^{\alpha-1}||_{L_{\infty}(\Omega)}
	\Bigr) ||u||_{W^4_p(\Omega)}
	\\
	& \leq
	C_{\alpha} 
	\Bigl(||v - w||_{W^{3+s}_p(\Omega)}
	+
	\bigl[||\,|v_{xxx}|^{\alpha-2}||_{L_{\infty}(\Omega)} + ||\,|w_{xxx}|^{\alpha-2}||_{L_{\infty}(\Omega)}\bigr]
	||v_{xxx} - w_{xxx}||_{W^s_p(\Omega)}
	\Bigr) ||u||_{W^4_p(\Omega)}
	\\
	&\leq
	C_{\alpha} 
	||v - w||_{W^{3+s}_p(\Omega)}.
\end{align*}
This means that for $\alpha \geq 2$ one can even prove that 
$\Acal \in \text{Lip}\bigl(W^{4\sigma}_p(\Omega);\Hcal(W^4_p(\Omega),L_p)\bigr)$,
where $\sigma = (3+s)/4$. A similar calculation shows that $\Fcal \in \text{Lip}\bigl(W^{4\sigma}_p(\Omega);L_p(\Omega)\bigr)$.
Hence for $\alpha \geq 2$ we are in the regime of \cite[Thm. 12.1]{A:1993} which gives existence and uniqueness of solutions
to \eqref{eq:thin-film} in the sense of Theorem \ref{th:existence_Sobolev}.
\end{remark}

\medskip


\subsection{Solutions of \eqref{eq:thin-film} in (little) H\"older spaces} \label{sec:Holder}


This section is devoted to the existence of classical solutions to the non-Newtonian thin-film
equation \eqref{eq:thin-film}, respectively \eqref{eq:thin-film_A}. More precisely we apply our
abstract Theorem \ref{th:existence_quasilinear_abstract} in the setting of (little) H\"older spaces.
Note again that we study the one-dimensional thin-film equation.
\medskip


As in Section \ref{sec:Sobolev} we start by introducing the relevant notation and function spaces.
Let $\Omega \subset\R$ be an open and bounded interval. For $k \in \N$ and $\rho \in (0,1)$ we 
define the usual H\"older spaces by
\begin{equation*}
	C^{\rho}(\bar{\Omega})
	=
	\Bigl\{v \in C(\bar{\Omega});\ 
	[v]_{C^{\rho}(\bar{\Omega})}
	=
	\sup_{x,z\in \bar{\Omega}, x\neq z} \frac{|v(x) - v(z)|}{|x-z|^{\rho}} < \infty\Bigr\}
	\quad
	\text{with}
	\quad
	||v||_{C^{\rho}(\bar{\Omega})} = ||v||_{C(\bar{\Omega})} + [v]_{C^{\rho}(\bar{\Omega})}
\end{equation*} 
and
\begin{equation*}
	C^{k+\rho}(\bar{\Omega})
	=
	\Bigl\{v \in C^k(\bar{\Omega});\ 
	[v^{(k)}]_{C^{\rho}(\bar{\Omega})} < \infty\Bigr\}
	\quad
	\text{with}
	\quad
	||v||_{C^{k+\rho}(\bar{\Omega})} = ||v||_{C^k(\bar{\Omega})} + [v^{(k)}]_{C^{\rho}(\bar{\Omega})}.
\end{equation*} 
We further introduce the so-called little-H\"older spaces
\begin{equation*}
	h^{\rho}(\bar{\Omega})
	=
	\Bigl\{v \in C^{\rho}(\bar{\Omega});\ 
	\lim_{\eps \to 0} \sup_{x, z \in \bar{\Omega}; 0<|x-z|<\eps} \frac{|v(x) - v(z)|}{|x-z|^{\rho}} = 0\Bigr\}
\end{equation*} 
and
\begin{equation*}
	h^{k+\rho}(\bar{\Omega})
	=
	\Bigl\{v \in C^{k+\rho}(\bar{\Omega});\ 
	\lim_{\eps \to 0} \sup_{x, z \in \bar{\Omega}; 0<|x-z|<\eps} \frac{|v^{(k)}(x) - v^{(k)}(z)|}{|x-z|^{\rho}} = 0\Bigr\}
\end{equation*} 
We recall some important properties of these spaces.


The space $h^{\rho}(\bar{\Omega})$ is a closed subspace of $C^{\rho}(\bar{\Omega})$ and hence
a Banach space.

If $0 < \sigma < 1$, then $h^{\sigma}(\bar{\Omega})$ is the closure of $C^{\rho}(\bar{\Omega})$ 
in $C^{\sigma}(\bar{\Omega})$ for all $\rho \in (\sigma,\infty]$.

Furthermore for $0\leq s_0 < s_1$ and $0 < \rho < 1$ the space $h^s(\bar{\Omega}), 
s = (1-\rho) s_0 + \rho s_1$ is the real interpolation space between $C^{s_1}(\bar{\Omega})$
and $C^{s_0}(\bar{\Omega})$, in symbols
\begin{equation*}
	h^s(\bar{\Omega})
	=
	\bigl(C^{s_0}(\bar{\Omega}),C^{s_1}(\bar{\Omega})\bigr)_{\rho},
	\quad
	s \notin \N,\ 0\leq s_0 < s_1.
\end{equation*}


In order to take the first and third order Neumann boundary conditions of problem \eqref{eq:thin-film}
into account we further introduce for $\rho \in (0,1]$ the spaces
\begin{equation*}
	h^{4\rho}_B(\bar{\Omega})
	=
	\begin{cases}
		\{v \in h^{4\rho}(\bar{\Omega});\ v_x = v_{xxx} = 0 \text{ on } \partial\Omega\},
		& 
		3 \leq 4\rho \leq 4
		\\
		\{v \in h^{4\rho}(\bar{\Omega});\ v_x = 0 \text{ on } \partial\Omega\},
		& 
		1 \leq 4\rho < 3
		\\
		h^{4\rho}(\bar{\Omega}),
		&
		0 < 4\rho < 1.
	\end{cases}
\end{equation*}
For $4\rho \in (0,4), 4\rho \notin \N$, the spaces $h^{4\rho}_B(\bar{\Omega})$ 
are closed linear subspaces of $h^{4\rho}(\bar{\Omega})$. Thanks to \cite[Thm. 2.3]{AT:1987} 
they may be characterised as the real interpolation spaces between 
$C^4_B(\bar{\Omega})$ and $C(\bar{\Omega})$;
\begin{equation*}
	h^{4\rho}_B(\bar{\Omega})
	=
	\bigl(C^4_B(\bar{\Omega}),C(\bar{\Omega})\bigr)_{\rho},
	\quad
	4\rho \notin \N.
\end{equation*}

\medskip

The main result of this section may now be formulated as follows.

\medskip

\begin{theorem} \label{th:existence_Holder}
Let $3/4 < \sigma < \rho \leq 1$. Then, given an initial film height $u_0 \in h^{4\rho}_B(\bar{\Omega})$
such that $u_0(x) > 0$ for all $x \in \bar{\Omega}$, for each $\alpha > 1$ there exists a positive 
time $T > 0$ and a solution $u$ of \eqref{eq:thin-film} on $[0,T]$ in the sense that
\begin{equation*}
	u \in 
	C\bigl([0,T];h^{4\rho}_B(\bar{\Omega})\bigr)
	\cap
	C^{\rho}\bigl([0,T];C(\bar{\Omega})\bigr)
	\cap
	C\bigl((0,T];C^4_B(\bar{\Omega})\bigr)
	\cap
	C^{1}\bigl((0,T];C(\bar{\Omega})\bigr).
\end{equation*}
If in addition $u_0 \in C^{4}_B(\bar{\Omega})$ then 
\begin{equation*}
	C\bigl([0,T];C^4_B(\bar{\Omega})\bigr)
	\cap
	C^{1}\bigl([0,T];C(\bar{\Omega})\bigr).
\end{equation*}
In any case $u$ satisfies
\begin{equation*}
	u(t,x) > 0,
	\quad
	(t,x) \in [0,T]\times\bar{\Omega}.
\end{equation*}
\end{theorem}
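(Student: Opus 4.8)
The plan is to run the proof of Theorem~\ref{th:existence_Sobolev} essentially verbatim, only replacing the base space $L_p(\Omega)$ by the space of continuous functions. So I would set $E_0 = C(\bar\Omega)$ and $E_1 = C^4_B(\bar\Omega)$. The smooth functions satisfying $v_x = v_{xxx} = 0$ on $\partial\Omega$ are dense in $C(\bar\Omega)$, so $(E_0,E_1)$ is a densely injected Banach couple, and the injection is compact by Arzel\`a--Ascoli; by \cite[Thm.~2.3]{AT:1987} the continuous interpolation spaces are $E_\theta = h^{4\theta}_B(\bar\Omega)$ for $4\theta\in(0,4)\setminus\N$, so the abstract interpolation scale is precisely the scale appearing in the statement. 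Given $3/4<\sigma<\rho\le 1$ I would take $E_\beta = h^{4\sigma}_B(\bar\Omega)$ and $E_\alpha = h^{4\rho}_B(\bar\Omega)$ (which is $E_1$ when $\rho=1$), and, using the coefficient cut-offs $\bar A,\bar A_\eps$ from Section~\ref{sec:application_existence}, reformulate \eqref{eq:thin-film_A} and its globalised version \eqref{eq:thin-film_A_global} as abstract quasilinear Cauchy problems of the form \eqref{eq:QP} with $\Acal(v)u = A_V\partial_x^4 u$, $\bar\Acal_\eps(v)u = \bar A_{\eps,V}\partial_x^4 u$ and $\Fcal$ as in \eqref{eq:rhs}, and then invoke Theorem~\ref{th:existence_quasilinear_abstract}.

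First I would check the hypotheses of Theorem~\ref{th:existence_quasilinear_abstract}. Because $\sigma>3/4$ one has $h^{4\sigma}_B(\bar\Omega)\embed C^3(\bar\Omega)$, so for $v\in h^{4\sigma}_B(\bar\Omega)$ the quadruple $V=(v,v_x,v_{xx},v_{xxx})$ is a continuous $\R^4$-valued map, $\bar A_{\eps,V}=\bar A_\eps\circ V\in C(\bar\Omega)$ with $\bar A_{\eps,V}\ge\eps/2$, and $F_V\in C(\bar\Omega)$. Thus $\bar\Acal_\eps(v)\in\Lcal(C^4_B(\bar\Omega);C(\bar\Omega))$, and the exact analogue of Lemma~\ref{lem:A_F_Holder} --- that $\bar\Acal_\eps:h^{4\sigma}_B(\bar\Omega)\to\Lcal(C^4_B(\bar\Omega);C(\bar\Omega))$ and $\Fcal:h^{4\sigma}_B(\bar\Omega)\to C(\bar\Omega)$ are $(\alpha-1)$-H\"older continuous on bounded balls --- follows from the pointwise multiplier bound $\norm{b\,\partial_x^4 u}_{C(\bar\Omega)}\le\norm{b}_{C(\bar\Omega)}\norm{u}_{C^4(\bar\Omega)}$, the local estimate \eqref{eq:A_F_loc_Holder}, and $h^{4\sigma}_B(\bar\Omega)\embed C^3(\bar\Omega)$, exactly as in the Sobolev case. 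For $\alpha\ge 2$ these maps are locally Lipschitz, hence locally $\mu$-H\"older for every $\mu\in(0,1)$, so choosing $\mu=\min(\alpha-1,\tfrac12)$ handles all $\alpha>1$ simultaneously --- this is why the H\"older result, unlike Theorem~\ref{th:existence_Sobolev}, is not restricted to $\alpha\in(1,2)$. Finally, since $\bar A_{\eps,V}\in C(\bar\Omega)$ is bounded below by $\eps/2>0$, the operator $-\bar A_{\eps,V}\partial_x^4$ together with the boundary operators $\partial_x,\partial_x^3$ is normally elliptic in the sense of \cite[Example~4.3(d)]{A:1993}, and \cite[Thm.~4.1 and Rem.~4.2(b)]{A:1993} yield $\bar\Acal_\eps(v)\in\Hcal(C^4_B(\bar\Omega);C(\bar\Omega))$, the semigroup being strongly continuous because $C^4_B(\bar\Omega)$ is dense in $C(\bar\Omega)$.

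With these points in place, Theorem~\ref{th:existence_quasilinear_abstract} (applied with $\mu$ as above and an auxiliary H\"older exponent in $(0,\rho-\sigma)$) gives, for each $R'>0$, a time $T_\eps>0$ such that whenever $\norm{u_0}_{h^{4\rho}_B(\bar\Omega)}\le R'$ the globalised problem has a solution $u_\eps\in C([0,T_\eps];h^{4\rho}_B(\bar\Omega))\cap C^{\nu}([0,T_\eps];h^{4\sigma}_B(\bar\Omega))\cap C((0,T_\eps];C^4_B(\bar\Omega))\cap C^1((0,T_\eps];C(\bar\Omega))$ for some $\nu\in(0,\rho-\sigma)$. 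Reading $u_\eps$ as the solution of the linear equation $\dot u+\bar\Acal_\eps(u_\eps(t))u=\Fcal(u_\eps(t))$, whose coefficients are H\"older in time and lie in a fixed $\Hcal(C^4_B(\bar\Omega),C(\bar\Omega),\kappa,\omega)$ by uniform ellipticity, part (iv) of Theorem~\ref{thm:existence_linear_Amann} with $\beta=0$ and $\alpha=\rho<1$ (or part (ii) when $u_0\in E_1$) upgrades the time regularity to $u_\eps\in C^{\rho}([0,T_\eps];C(\bar\Omega))$, and when $u_0\in C^4_B(\bar\Omega)=E_1$ part (ii) additionally gives $u_\eps\in C([0,T_\eps];C^4_B(\bar\Omega))\cap C^1([0,T_\eps];C(\bar\Omega))$. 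It then remains to descend from the globalised to the original equation: choosing $\eps$ small enough that $2(\eps/2a)^{1/3}\le\min_{\bar\Omega}u_0$, and then $T\in(0,T_\eps]$ small, the bound on $[u_\eps]_{C^{\nu}([0,T_\eps];C(\bar\Omega))}$ coming from Theorem~\ref{th:existence_quasilinear_abstract} forces $u_\eps(t,x)>(\eps/2a)^{1/3}>0$ on $[0,T]\times\bar\Omega$, so $A_{U_\eps}=\bar A_{\eps,U_\eps}$ there and $u_\eps$ solves \eqref{eq:thin-film_A}, equivalently \eqref{eq:thin-film}, on $[0,T]$ with $u>0$; this positivity-and-consistency step is identical to the one in the proof of Theorem~\ref{th:existence_Sobolev}. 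The main obstacle, as opposed to the largely mechanical transcription of the Sobolev argument (with $C^3$-embeddings replacing the Sobolev embeddings), is to pin down that the continuous-function scale $(C(\bar\Omega),C^4_B(\bar\Omega),(h^{4\theta}_B(\bar\Omega))_\theta)$ meets all the structural requirements of Theorem~\ref{th:existence_quasilinear_abstract} --- density and compactness of the injection, the interpolation identity of \cite{AT:1987}, and above all the generation of a strongly continuous analytic semigroup by a fourth-order normally elliptic operator on a space of continuous functions.
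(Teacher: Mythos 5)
Your proof follows essentially the same route as the paper: take $E_0 = C(\bar\Omega)$, $E_1 = C^4_B(\bar\Omega)$, identify the intermediate scale as $E_\theta = h^{4\theta}_B(\bar\Omega)$ via \cite[Thm.~2.3]{AT:1987}, obtain the local $(\alpha-1)$-H\"older continuity of $\bar\Acal_\eps$ and $\Fcal$ from the pointwise estimate \eqref{eq:A_F_loc_Holder} together with the embedding $h^{4\sigma}_B(\bar\Omega)\embed C^3(\bar\Omega)$, feed everything into Theorem~\ref{th:existence_quasilinear_abstract}, and descend from the globalised to the original problem via positivity, exactly as in the Sobolev case. The one place where your argument departs from the paper's --- and where it is on shaky ground --- is the semigroup-generation step: you invoke Amann's normal ellipticity machinery \cite[Example~4.3(d), Thm.~4.1, Rem.~4.2(b)]{A:1993} to conclude $\bar\Acal_\eps(v)\in\Hcal(C^4_B(\bar\Omega);C(\bar\Omega))$, but those results are formulated in the $L_p$ Sobolev/Besov scale and do not directly cover a base space of continuous functions. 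The paper instead cites the dedicated generation results of \textsc{Stewart} \cite{Stewart:1974,Stewart:1980} for strongly elliptic operators on $C(\bar\Omega)$ under general boundary conditions; this is the one input that cannot simply be transcribed verbatim from the $L_p$ proof, and while you correctly identify it as the main structural obstacle you then reach for the wrong reference. Beyond that, you make explicit the upgrade to $C^\rho([0,T];E_0)$ via Theorem~\ref{thm:existence_linear_Amann}(iv) and the passage to $\alpha\ge 2$ by observing that locally Lipschitz maps are locally $\mu$-H\"older for any $\mu\in(0,1)$ --- both steps the paper leaves implicit under ``following the lines of the proof of Theorem~\ref{th:existence_Sobolev}'' --- and these additions are correct.
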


The proof of Theorem \ref{th:existence_Holder} is similar to the one in the setting 
of fractional Sobolev spaces.


We verify that the conditions of the abstract result Theorem 
\ref{th:existence_quasilinear_abstract} are satisfied and identify
\begin{equation*}
	E_0 = C(\bar{\Omega}),
	\quad
	E_1 = C^4_{B}(\bar{\Omega}).
\end{equation*}
For this choice it is well-known that
\begin{equation*}
	C^4_{B}(\bar{\Omega}) \xhookrightarrow{d} C(\bar{\Omega})
	\quad
	\text{and}
	\quad
	C^4_{B}(\bar{\Omega}) \xhookrightarrow{c} C(\bar{\Omega}).
\end{equation*}
Denoting by $E_{\rho}=h^{4\rho}_{p,B}(\bar{\Omega})$ the respective interpolation spaces,
we have (see for instance \cite[Thm. I.2.11.1]{A:1995}) 
\begin{equation*}
	h^{4\rho}_{B}(\bar{\Omega}) \xhookrightarrow{d} 
	h^{4\sigma}_{B}(\bar{\Omega}) 
	\quad
	\text{and}
	\quad
	h^{4\rho}_{B}(\bar{\Omega}) \xhookrightarrow{c} 
	h^{4\sigma}_{B}(\bar{\Omega}),
	\quad
	0 \leq \sigma < \rho \leq 1.
\end{equation*}


As before we view the evolution equation $\eqref{eq:thin-film_A}_1$ in non-divergence form
as an abstract quasilinear Cauchy problem.
For $v \in h^{4\sigma}_{B}(\Omega)$ with $\sigma = \frac{3+s}{4}$ such that $v(x) > 0$ for all
$x \in \bar{\Omega}$ we associate to \eqref{eq:thin-film_A}
the linear differential operator 
\begin{equation} \label{eq:operator_Holder}
	\Acal(v(t)) \in \Lcal\bigl(C^4_{B}(\bar{\Omega});C(\bar{\Omega})\bigr),
	\quad
	\Acal(v(t)) u := A_V(t) \partial_x^4 u
\end{equation}
of fourth order. Then, with
\begin{equation} \label{eq:rhs_Holder}
	\Fcal(v(t)) =
	-3a \bigl(v^2 v_x v_{xxx} + \tilde{b}^{\alpha-1} v^{\alpha+1} v_x |v_{xxx}|^{\alpha-1} v_{xxx}\bigr)
\end{equation}
we rewrite \eqref{eq:thin-film_A} as
\begin{equation} \label{eq:CP_Holder}
	\begin{cases}
		\dot{u} + \Acal(u) u &=\quad \Fcal(u),
		\quad t > 0
		\\
		u(0) &=\quad u_0.
	\end{cases}
\end{equation}

\medskip

As in the proof of Theorem \ref{th:existence_Sobolev} we first solve the extended problem
and then prove that the solution also satisfies the original equations.

\medskip

\begin{proof}[Proof of Theorem \ref{th:existence_Holder}]
As in Lemma \ref{lem:A_F_Holder} one obtains that the right-hand side, considered as a map 
$\Fcal : h^{4\sigma}_B(\bar{\Omega}) \to C(\bar{\Omega})$, and the differential operator
$\bar{\Acal}_{\eps} : h^{4\sigma}_B(\bar{\Omega}) \to \Lcal(C^4_B(\bar{\Omega});C(\bar{\Omega}))$
are H\"older continuous on all balls in the sense that
\begin{equation*}
	||\Fcal(v) - \Fcal(w)||_{C(\bar{\Omega})}
	\leq
	C_R
	||v - w||_{h^{4\sigma}_B(\bar{\Omega})}^{\alpha-1}
	\quad
	\text{and}
	\quad
	||\bar{\Acal}_{\eps}(v) - \bar{\Acal}_{\eps}(w)||_{\Lcal(C^4_B(\bar{\Omega});C(\bar{\Omega}))}
	\leq
	C_R 
	||v - w||_{h^{4\sigma}_B(\bar{\Omega})}^{\alpha-1}
\end{equation*}
for all $v, w \in h^{4\sigma}_B(\bar{\Omega})$ with 
$||v||_{h^{4\sigma}_B(\bar{\Omega}))}, ||w||_{h^{4\sigma}_B(\bar{\Omega})} \leq R$.

From \cite{Stewart:1974,Stewart:1980} we know that
$-\bar{\Acal}_{\eps}(v)$ generates for each $v \in h^{4\sigma}_B(\bar{\Omega})$ an 
analytic semigroup on $C(\bar{\Omega})$, i.e.
\begin{equation*}
	\bar{\Acal}_{\eps}(v) \in \Hcal\bigl(C^4_B(\bar{\Omega});C(\bar{\Omega})\bigr).
\end{equation*}

We obtain the assertion by following the lines of the proof of Theorem \ref{th:existence_Sobolev}.
\end{proof}

\medskip


\section{Uniqueness of solutions to \eqref{eq:thin-film} for flow behaviour exponents $\alpha \in (1,2)$}
\label{sec:uniqueness}

Recall from Sections \ref{sec:Sobolev} and \ref{sec:Holder} that for flow behaviour 
exponents $\alpha \geq 2$ we have Lipschitz continuity of the differential operator $\Acal$ as
well as the right-hand side $\Fcal$. Thus, for $\alpha \geq 2$ we are in the setting of
Eidel'man \cite[Thm. III.4.6.3]{E:1969} and Amann \cite[Thm. 12.1]{A:1993} and obtain 
uniqueness of solutions to \eqref{eq:thin-film} by a contraction argument.

\medskip

For flow behaviour exponents $\alpha \in (1,2)$ we get existence of solutions to
\eqref{eq:thin-film} in fractional Sobolev and little H\"older spaces, respectively, by
compactness of the solution operator for the linear problem, c.f. Theorems 
\ref{th:existence_Sobolev} and \ref{th:existence_Holder}.

\medskip
 
In this section we prove uniqueness of solutions to \eqref{eq:thin-film} by deriving an energy
inequality for which we use the special structure of the equation. More precisely, we extend the approach used in 
\cite{BF:1990} for the Newtonian thin-film equation to prove that for $\alpha \in (1,2)$ two positive 
strong solutions of \eqref{eq:thin-film} coincide if this is the case initially. For this purpose observe
that the energy
\begin{equation*}
	E(u) = \frac{1}{2} \int_{\Omega} |u_x|^2 dx
\end{equation*}
decreases along smooth solutions of \eqref{eq:thin-film}. Indeed, if $u$ is a smooth solution of 
\eqref{eq:thin-film}, then
\begin{equation*}
	\frac{d}{dt} E(u(t))
	=
	-\int_{\Omega} u_{xx} u_t\, dx
	=
	-a \int_{\Omega} u^3 |u_{xxx}|^2 + b^{\alpha-1} u^{\alpha+2} |u_{xxx}|^{\alpha+1}\, dx
\end{equation*}
and hence
\begin{equation} \label{eq:energy_eq}
	\frac12 \int_{\Omega} u_x^2(t) dx 
	+ a \int_0^T \int_{\Omega} u^3 |u_{xxx}|^2 + b^{\alpha-1} u^{\alpha+2} |u_{xxx}|^{\alpha+1} dx dt
	=
	\frac12 \int_{\Omega} (u_0)_x^2 dx.
\end{equation}

\medskip

To justify the energy inequality  \eqref{eq:energy_eq} for solutions in our regularity class and to 
apply a similar argument to the difference of two solutions we use the following fact. 

\medskip

\begin{proposition} \label{pr:dual_pairing_differentiable}  
Suppose that 
$w_1, w_2 \in C( (0,T); W^1_{p',0}(\Omega)) \cap C^1( (0,T), W^{-1}_{p}(\Omega))$. 
Then the map $t \mapsto \langle w_1(t), w_2(t) \rangle$ is differentiable in $(0,T)$ and
\begin{equation*}
	\frac{d}{dt} \int_\Omega w_1 w_2 \, dx  
	= 
	\langle \frac{d}{dt} w_1, w_2 \rangle + \langle \frac{d}{dt} w_2, w_1 \rangle,
\end{equation*}
where $\langle \cdot, \cdot \rangle$ denotes the dual pairing between $W^{-1}_{p}(\Omega)$ and $W^{1}_{p'}(\Omega)$. 
\end{proposition}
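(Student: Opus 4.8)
The plan is to reduce everything to the one-dimensional Sobolev embedding of $W^1_{p',0}(\Omega)$ and to the fundamental theorem of calculus for the Bochner integral in $W^{-1}_p(\Omega)$. The first thing to pin down is the meaning of the brackets. Since $\Omega\subset\R$ is a bounded interval, $W^1_{p',0}(\Omega)\hookrightarrow C(\bar{\Omega})\hookrightarrow L^r(\Omega)$ for every $r\in[1,\infty]$; in particular $W^1_{p',0}(\Omega)\hookrightarrow L^p(\Omega)\hookrightarrow W^{-1}_p(\Omega)$, and for $f\in L^p(\Omega)$ and $g\in W^1_{p',0}(\Omega)$ one has $\langle f,g\rangle=\int_\Omega fg\,dx$. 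Because each $w_i(t)$ lies both in $W^1_{p',0}(\Omega)$ (by the $C^0$-regularity) and in $W^{-1}_p(\Omega)$ (by the $C^1$-regularity), every bracket occurring in the statement is well defined, agrees with the Lebesgue integral $\int_\Omega w_1(t)w_2(t)\,dx$, and is symmetric: $\langle w_1(t),w_2(t)\rangle=\langle w_2(t),w_1(t)\rangle$. This symmetry is precisely what makes the asserted identity internally consistent, since in the second term $\langle \tfrac{d}{dt}w_2,w_1\rangle$ the first slot holds an element of $W^{-1}_p$ and the second an element of $W^1_{p',0}$, i.e. the pairing is used "in the same direction" as in the first term.

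Next I would set $\Phi(t):=\langle w_1(t),w_2(t)\rangle$, fix $t_0\in(0,T)$, and take $h$ small enough that $t_0$ and $t_0+h$ lie in a fixed compact subinterval of $(0,T)$. Since $w_i\in C^1((0,T);W^{-1}_p(\Omega))$, the fundamental theorem of calculus gives the Bochner-integral identity $w_i(t_0+h)-w_i(t_0)=\int_{t_0}^{t_0+h}\dot w_i(\tau)\,d\tau$ in $W^{-1}_p(\Omega)$. Combining the telescoping decomposition
\[
  \Phi(t_0+h)-\Phi(t_0)=\big\langle w_1(t_0+h)-w_1(t_0),\,w_2(t_0)\big\rangle+\big\langle w_1(t_0+h),\,w_2(t_0+h)-w_2(t_0)\big\rangle
\]
with the symmetry of Step 1 (to flip the second bracket so that the increment sits in the $W^{-1}_p$-slot) and with the fact that a bounded linear functional commutes with the Bochner integral, I obtain
\[
  \Phi(t_0+h)-\Phi(t_0)=\int_{t_0}^{t_0+h}\big\langle \dot w_1(\tau),w_2(t_0)\big\rangle\,d\tau+\int_{t_0}^{t_0+h}\big\langle \dot w_2(\tau),w_1(t_0+h)\big\rangle\,d\tau .
\]
Here the functionals $\eta\mapsto\langle\eta,w_2(t_0)\rangle$ and $\eta\mapsto\langle\eta,w_1(t_0+h)\rangle$ are bounded on $W^{-1}_p(\Omega)$ with norms $\le\|w_2(t_0)\|_{W^1_{p'}}$, resp. $\le\|w_1(t_0+h)\|_{W^1_{p'}}$, which is all that is needed for the interchange.

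Dividing by $h$ and letting $h\to0$ (from both sides), the first average tends to $\langle \dot w_1(t_0),w_2(t_0)\rangle$ by continuity of $\tau\mapsto\dot w_1(\tau)$ into $W^{-1}_p$, and the second tends to $\langle \dot w_2(t_0),w_1(t_0)\rangle$: one estimates $|\langle\dot w_2(\tau),w_1(t_0+h)\rangle-\langle\dot w_2(t_0),w_1(t_0)\rangle|$ by $\|\dot w_2(\tau)-\dot w_2(t_0)\|_{W^{-1}_p}\|w_1(t_0+h)\|_{W^1_{p'}}+\|\dot w_2(t_0)\|_{W^{-1}_p}\|w_1(t_0+h)-w_1(t_0)\|_{W^1_{p'}}$, which goes to $0$ uniformly for $\tau$ between $t_0$ and $t_0+h$ by continuity of $\dot w_2$ and of $w_1$. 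Hence $\Phi$ is differentiable at $t_0$ with $\Phi'(t_0)=\langle \dot w_1(t_0),w_2(t_0)\rangle+\langle \dot w_2(t_0),w_1(t_0)\rangle$; since this right-hand side is itself continuous in $t_0$ (joint continuity of the bilinear pairing together with $\dot w_i\in C((0,T);W^{-1}_p)$ and $w_i\in C((0,T);W^1_{p',0})$), in fact $\Phi\in C^1((0,T))$, which is stronger than claimed.

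The computation is essentially bookkeeping; the only genuinely delicate points are (i) identifying the two brackets and establishing their symmetry, which is where the one-dimensional embedding $W^1_{p',0}(\Omega)\hookrightarrow C(\bar{\Omega})$ enters, and (ii) the commutation of the duality pairing with the Bochner integral, which is standard once the relevant evaluation functionals are seen to be bounded. If one prefers to avoid the Bochner fundamental theorem of calculus, an equivalent route is to mollify $w_1,w_2$ in time on a compact subinterval of $(0,T)$, apply the elementary product rule for smooth Banach-space-valued curves composed with the continuous bilinear pairing, integrate, and pass to the limit using $w_i^{\varepsilon}\to w_i$ in $C(\,\cdot\,;W^1_{p',0})$ and $\dot w_i^{\varepsilon}\to\dot w_i$ in $C(\,\cdot\,;W^{-1}_p)$; the limit of the integrands is then uniform on the subinterval, so the identity survives.
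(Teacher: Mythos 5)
Your proof is correct and follows essentially the same route as the paper's one-line argument: decompose the increment of the bilinear form, identify the two brackets via the one-dimensional embedding $W^1_{p',0}(\Omega)\hookrightarrow C(\bar\Omega)\hookrightarrow L^p(\Omega)\hookrightarrow W^{-1}_p(\Omega)$ (which supplies the symmetry), and then pass to the limit of the difference quotient using the $C^1((0,T);W^{-1}_p)$-regularity on one factor and the $C((0,T);W^1_{p',0})$-regularity on the other. The only cosmetic difference is that the paper takes the limit of the difference quotient directly (handling the cross term by the boundedness of $h^{-1}(w_2(t+h)-w_2(t))$ in $W^{-1}_p$ against the vanishing of $w_1(t+h)-w_1(t)$ in $W^1_{p',0}$), whereas you insert the Bochner fundamental theorem of calculus and take a limit of integral averages; both are valid and rest on the same two-tier regularity.
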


\medskip

\begin{proof} 
This follows by writing out the difference quotient and noting that $h^{-1}(w_2(t+h) - w_2(t))$ is 
bounded in $W^{-1}_{p}(\Omega)$ while $w_1(t+h) - w_1(t)$ goes to to zero in $W^{1}_{p',0}(\Omega)$ 
as $h \to 0$.
\end{proof}

\medskip

Proposition \ref{pr:dual_pairing_differentiable} guarantees in particular that the expression 
$\frac{d}{dt} E(u(t))$ is well-defined for solutions $u$ obtained
by Theorem \ref{th:existence_Sobolev} in the fractional Sobolev space setting
or by Theorem \ref{th:existence_Holder} in the little H\"older setting.
This allows us to show the following uniqueness result.

\medskip

\begin{theorem} \label{th:uniqueness}
Let $\alpha > 1$. Let $u$ and $v$ be two positive solutions of \eqref{eq:thin-film} as in Theorem
\ref{th:existence_Sobolev} or Theorem \ref{th:existence_Holder}.
on $[0,T]$, emanating from the same initial value $u_0$, where $u_0(x) > 0$ for all $x \in \bar{\Omega}$. 
Then $u=v$ on $[0,T]$.
\end{theorem}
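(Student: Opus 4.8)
The plan is to prove uniqueness by an energy estimate applied to the difference $w = u - v$ of two positive solutions. The key structural observation is that, writing the equation in divergence form, the right-hand side of $\eqref{eq:thin-film}_1$ is $-\partial_x\bigl(\Phi(u)u_{xxx} + \Psi(u)|uu_{xxx}|^{\alpha-1}u_{xxx}\bigr)$ for appropriate powers $\Phi(u)=au^3$ and $\Psi(u)=ab^{\alpha-1}u^{\alpha+2}/u^{\alpha-1}$ (up to the precise algebra with $b$), which can be pitted against a test function related to $w$. Following \cite{BF:1990}, the natural quantity to differentiate is not $\int w^2$ but a negative-order norm: one introduces $\phi = \phi(t,\cdot)$ solving the elliptic problem $-\phi_{xx} = w$ in $\Omega$ with $\phi_x = 0$ on $\partial\Omega$ and $\int_\Omega \phi\,dx = 0$ (note $\int_\Omega w\,dx = 0$ is conserved by the Neumann/no-flux structure, so $\phi$ is well defined), and considers $G(t) = \tfrac12\int_\Omega \phi_x^2\,dx = \tfrac12\|w\|_{H^{-1}}^2$. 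By Proposition~\ref{pr:dual_pairing_differentiable}, applied with $w_1 = \phi$ and $w_2 = w = \dot{}$-of-something in $W^{-1}_p$, the map $t\mapsto G(t)$ is differentiable, and $\tfrac{d}{dt}G(t) = \langle \dot w, \phi\rangle = -\int_\Omega \bigl(\Phi(u)u_{xxx}-\Phi(v)v_{xxx}\bigr)\phi_x\,dx - \int_\Omega\bigl(\text{flux}_\alpha(u)-\text{flux}_\alpha(v)\bigr)\phi_x\,dx$ after integrating by parts and using $\phi_{xxx} = -w_x$.

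Next I would estimate each term on the right. For the Newtonian part, one splits $\Phi(u)u_{xxx}-\Phi(v)v_{xxx} = \Phi(u)(u_{xxx}-v_{xxx}) + (\Phi(u)-\Phi(v))v_{xxx} = \Phi(u)w_{xxx} + (\Phi(u)-\Phi(v))v_{xxx}$, pairs $\Phi(u)w_{xxx}$ against $\phi_x = $ (something controlling $w$), and integrates by parts once more to produce a coercive term $\int_\Omega \Phi(u)w_{xx}^2\,dx \ge c\int_\Omega w_{xx}^2$ (using positivity and boundedness of $u$, hence $\Phi(u)\ge c>0$) plus lower-order commutator terms involving $\Phi'(u)u_x$, which are absorbed using the regularity $u,v\in C([0,T];W^{4\rho}_{p,B})$ with $4\rho > 3 + 1/p$, so that $u,v,u_x,u_{xx},u_{xxx}$ are bounded. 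The non-Newtonian part $|uu_{xxx}|^{\alpha-1}u_{xxx}$ is the delicate one: here I would exploit the monotonicity inequality $(|\xi|^{\alpha-1}\xi - |\eta|^{\alpha-1}\eta)(\xi-\eta)\ge 0$ for the leading nonlinearity so that the principal difference has a favourable sign when paired correctly, while the dependence of the coefficient $u^{\alpha+2}$ on $u$ versus $v$ contributes a term controlled by $\|w\|_\infty$ times bounded factors (using $\alpha-1 \in (0,1)$ and the uniform bounds on $u_{xxx},v_{xxx}$, together with the elementary inequality $\bigl||\xi|^{\alpha-1}\xi\bigr| \le |\xi|^\alpha$). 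All the non-coercive terms should be bounded by $C\|w\|_{H^1}\|w\|_{H^{-1}}$ or $C\|w_{xx}\|_{L^2}\|w\|_{H^{-1}}$.

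The endgame is an absorption argument. After collecting terms, one arrives at an inequality of the form
\begin{equation*}
	\frac{d}{dt}G(t) + c\int_\Omega w_{xx}^2\,dx \le C\Bigl(\int_\Omega w_{xx}^2\,dx\Bigr)^{1/2} G(t)^{1/2} + C\,G(t),
\end{equation*}
and then Young's inequality $C a^{1/2}b^{1/2} \le \tfrac{c}{2}a + \tfrac{C^2}{2c}b$ lets one absorb the coercive term on the left, leaving $\tfrac{d}{dt}G(t)\le C' G(t)$. Since $G(0) = \tfrac12\|u_0-u_0\|_{H^{-1}}^2 = 0$, Gr\"onwall's lemma forces $G\equiv 0$ on $[0,T]$, hence $w\equiv 0$, i.e. $u=v$. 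I expect the main obstacle to be the rigorous treatment of the non-Newtonian flux difference: making precise the sign/monotonicity structure of $|uu_{xxx}|^{\alpha-1}u_{xxx} - |vv_{xxx}|^{\alpha-1}v_{xxx}$ when $u\neq v$ (so the coefficient inside the modulus also varies), and checking that the resulting error terms genuinely have the subcritical scaling that permits absorption — this is where the restriction $\alpha\in(1,2)$ and the positivity $u,v>0$ (bounding $u$ away from $0$ and from $\infty$) are essential, and where one must be careful that $|u_{xxx}|^{\alpha-1}$, though only H\"older in general, is here bounded because $u_{xxx}\in C(\bar\Omega)$. A secondary technical point is justifying that $\dot w$ indeed lies in $C((0,T);W^{-1}_p(\Omega))$ with $w\in C((0,T);W^1_{p',0}(\Omega))$ so that Proposition~\ref{pr:dual_pairing_differentiable} applies; this follows from the regularity classes in Theorems~\ref{th:existence_Sobolev} and \ref{th:existence_Holder} together with the equation, which expresses $\dot w$ as $\partial_x$ of an $L_p$ function.
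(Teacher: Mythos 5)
Your choice of Lyapunov functional is where the argument breaks down. The paper does not use the $H^{-1}$-type quantity $G(t)=\tfrac12\int\phi_x^2\,dx$ (with $-\phi_{xx}=w$) that you introduce; it differentiates the surface-tension energy of the difference, $\tfrac12\int_\Omega (u_x-v_x)^2\,dx$. For a fourth-order equation this distinction is not cosmetic. With $\tfrac12\int (u_x-v_x)^2$, differentiation and one integration by parts pair the flux difference directly against $w_{xxx}=u_{xxx}-v_{xxx}$, producing the coercive terms $a\int u^3 |w_{xxx}|^2$ \emph{and}, via the monotonicity inequality $(|\xi|^{\alpha-1}\xi-|\eta|^{\alpha-1}\eta)(\xi-\eta)\ge c_\alpha|\xi-\eta|^{\alpha+1}$, the term $c\int|w_{xxx}|^{\alpha+1}$. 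Every remaining error term is of the form $\int w_{xxx}\,v_{xxx}\,(\text{difference of powers of } u,v)$, hence bounded by $\|w_{xxx}\|_{L^2}\cdot\|w\|_\infty\cdot(\text{bounded})$, which Young's inequality absorbs into the coercive part. This is exactly why the $\dot H^1$ energy is the right choice.

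With your $H^{-1}$ functional, the coercivity you get from the linear part $\Phi(u)w_{xxx}$ is $\int\Phi(u)\,w_x^2$ (not $\int\Phi(u)w_{xx}^2$ as you wrote: two integrations by parts take $w_{xxx}\phi_x\mapsto w_{xx}w\mapsto -w_x^2$, using $\phi_{xx}=-w$). That is two full spatial derivatives below what is needed, because the linearisation of the $\alpha$-flux,
$u^{\alpha+2}\bigl(|u_{xxx}|^{\alpha-1}u_{xxx}-|v_{xxx}|^{\alpha-1}v_{xxx}\bigr)$, scales like $w_{xxx}$ pointwise; paired against $\phi_x$ this gives an error of order $\|w_{xxx}\|_{L^2}\cdot G^{1/2}$, which $\int w_x^2$ cannot absorb. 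Nor can you reach $w_{xxx}$ by further integrations by parts here, since that would require differentiating $|u_{xxx}|^{\alpha-1}u_{xxx}$, a map that is only $C^{\alpha-1}$ for $\alpha\in(1,2)$. Finally, the monotonicity trick that you invoke (``favourable sign when paired correctly'') is unavailable in the $H^{-1}$ setting precisely because the test function $\phi_x$ is an antiderivative of $-w$, not $w_{xxx}$; the favourable pairing never materialises. So the overall strategy (energy + monotonicity + absorption + Gr\"onwall) is the same as in the paper, but the negative-order norm undercuts both the coercivity level and the monotonicity pairing, and I don't see how to rescue it; you need to work with $\|(u-v)_x\|_{L^2}^2$ as the paper does.
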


\medskip

\begin{proof}
We consider only solutions obtained by Theorem \ref{th:existence_Sobolev}. The proof for solutions 
in the sense of Theorem \ref{th:existence_Holder} is the same. By the usual continuation argument 
it suffices to show that there exists a time $0 < T_{\ast} \leq T$ such that $u=v$ on $[0,T_{\ast})$.
Since both $u$ and $v$ are positive as long as they exist there is a $0 < T_0 < T$ and constants 
$c, C > 0$ such that
\begin{equation*}
	0 < c \leq u(t,x), v(t,x) \leq C,
	\quad
	t \in [0,T_0],\  x \in \bar{\Omega}.
\end{equation*}
For all $t \in (0,T_0)$ we may now extend the arguments of ~\cite{BF:1990} in the following way.

We know that $(u-v) \in C((0,T]; W^{4}_{p,B}(\Omega)) \cap C^1((0,T); L_p(\Omega))$.  
Since $W^{4}_{p,B}(\Omega)$ embedds into $C^3(\bar \Omega)$
in particular $(u- v)_x \in C((0,T]; C^2(\bar \Omega))$ and $(u-v)_x = 0$ on $\partial \Omega$. Thus it follows from 
Proposition~\ref{pr:dual_pairing_differentiable} that $t \mapsto \int_\Omega (u_x - v_x)^2 \,dx$ is differentiable in $(0,T)$ and
\begin{equation*}
	\frac{d}{dt} \frac12  
	\int_\Omega (u_x - v_x)^2 \,dx 
	= 
	\langle (u_t - v_t)_x, (u-v)_x \rangle 
	= 
	- \int_\Omega (u_t - v_t) (u_{xx} - v_{xx})\, dx.
\end{equation*}
Using the equations for $u_t$ and $v_t$, integrating by parts once more and using that $u_{xxx} = v_{xxx} = 0$ 
on $\partial \Omega$ we get after integration in time 
\begin{equation}   \label{eq:energy_est_difference} 
\begin{split}
	&\frac12    \int_\Omega (u_x(t) - v_x(t))^2 \,dx -  \frac12    \int_\Omega (u_x(s) - v_x(s))^2 \,dx   \\
  	= &- \int_s^t \int_\Omega  \big(  u^3|u_{xxx}|^2 + b^{\alpha-1} u^{\alpha+2} |u_{xxx}|^{\alpha+1} -   
	v^3 |v_{xxx}|^2 + b^{\alpha-1} v^{\alpha+2} |v_{xxx}|^{\alpha+1}\big) \, \big(u_{xxx} - v_{xxx}\big)  \, dx \, d\tau
\end{split}
\end{equation}
for all $0 < s < t < T$. 
Since $4 \rho > 3 + \frac1p$ the space 
$W^{4 \rho}_{p,B}(\Omega)$ embedds into $C^3(\bar \Omega)$ and we have $u,v \in C([0,T]; C^3(\bar \Omega)$. 
Thus we can easily pass to to the limit $s \downarrow 0$ and conclude  that \eqref{eq:energy_est_difference}
also holds for $s= 0$. 

Using elementary manipulations of the integrands on the right-hand side and the fact that $u(0) = v(0) = u_0$ 
we deduce the following identity for the relative energy 
\begin{equation} \label{eq:rel_energy}
\begin{aligned}
	\frac12 \int_{\Omega} \bigl(u_x(t)-v_x(t)\bigr)^2 dx
	=\ &
	-a \int_0^t \int_{\Omega} |u_{xxx} - v_{xxx}|^2 u^3  dx\, ds
	-a \int_0^t \int_{\Omega} (u_{xxx} - v_{xxx}) v_{xxx} (u^3 - v^3) dx\, ds 
	\\
	&
	-ab^{\alpha-1} \int_0^t \int_{\Omega} (u_{xxx} - v_{xxx}) 
	u^{\alpha+2} \bigl(|u_{xxx}|^{\alpha-1} u_{xxx} - |v_{xxx}|^{\alpha-1} v_{xxx}\bigr) dx\, ds \\
	& 
	-ab^{\alpha-1} \int_0^t \int_{\Omega} (u_{xxx} - v_{xxx}) |v_{xxx}|^{\alpha-1} v_{xxx} \bigl(u^{\alpha+2} - v^{\alpha+2}) dx\, ds.
\end{aligned}
\end{equation}
Since $u$ is bounded away from zero by $c>0$ we may use the inequality (cf. \cite[Lemma 4.4]{DiB93})
\begin{equation*}
	\bigl(|u_{xxx}|^{\alpha-1}u_{xxx} - |v_{xxx}|^{\alpha-1} v_{xxx}\bigr) (u_{xxx} - v_{xxx})
	\geq 
	c_{\alpha} |u_{xxx} - v_{xxx}|^{\alpha+1},
	\quad
	\alpha \geq 1,
\end{equation*}
in the third integral of the right-hand side of \eqref{eq:rel_energy} to obtain
\begin{align*}
	\frac12 \int_{\Omega} \bigl(u_x(t) &- v_x(t)\bigr)^2 dx
	+
	a c^3 \int_0^t \int_{\Omega} |u_{xxx} - v_{xxx}|^2 dx\, ds
	+
	c^{\alpha+2} c_{\alpha} \int_0^t \int_{\Omega} |u_{xxx} - v_{xxx}|^{\alpha+1} dx\, ds \\
	\leq\ &
	a \int_0^t \int_{\Omega} \bigl| (u_{xxx} - v_{xxx}) v_{xxx} (u^3 - v^3)\bigr| dx\, ds \\
	&
	+
	a b^{\alpha-1} \int_0^t \int_{\Omega} 
	\bigl| (u_{xxx} - v_{xxx}) |v_{xxx}|^{\alpha-1} v_{xxx} (u^{\alpha+2} - v^{\alpha+2}) \bigr| dx\, ds.
\end{align*} 
Applying Young's (weighted) inequality to the remaining two integrals on the right-hand side, respectively, yields
\begin{equation*}
	\int_0^t \int_{\Omega} \bigl| (u_{xxx} - v_{xxx}) v_{xxx} (u^3 - v^3) \bigr| dx\, ds 
	\leq
	\frac{c^3}{4} \int_0^t \int_{\Omega} |u_{xxx} - v_{xxx}|^2 dx\, ds 
	+
	\frac{1}{c^3} \int_0^t \int_{\Omega} |v_{xxx}|^2 |u^3 - v^3|^2 dx\, ds
\end{equation*}
and
\begin{align*}
	b^{\alpha-1} \int_0^t \int_{\Omega} & 
	\bigl| (u_{xxx} - v_{xxx}) |v_{xxx}|^{\alpha-1} v_{xxx} (u^{\alpha+2} - v^{\alpha+2}) \bigr| dx\, ds \\
	&\leq
	\frac{c^3}{4} \int_0^t \int_{\Omega} |u_{xxx} - v_{xxx}|^2 dx\, ds
	+
	\frac{b^{2(\alpha-1)}}{c^3} \int_0^t \int_{\Omega} |v_{xxx}|^{2\alpha} |u^{\alpha+2} - v^{\alpha+2}|^2 dx\, ds.
\end{align*}
Hence we find that
\begin{equation*}
	\frac{1}{2} \int_{\Omega} \bigl(u_x(t) - v_x(t)\bigr)^2 dx
	\leq
	\frac{a}{c^3} \int_0^t \int_{\Omega} |v_{xxx}|^2 |u^3 - v^3|^2 
	+
	b^{2(\alpha-1)} |v_{xxx}|^{2\alpha} |u^{\alpha+2} - v^{\alpha+2}|^2 dx\, ds.
\end{equation*}
Recalling that both $u$ and $v$ are bounded above, using the elementary inequalities
\begin{equation*}
	|u^3 - v^3|^2 \leq C(u,v) |u - v|^2
	\quad
	\text{and}
	\quad
	|u^{\alpha+2} - v^{\alpha+2}|^2 \leq C(u,v) |u - v|^2
\end{equation*}
and an $L_1$--$L_{\infty}$ H\"older estimate on $(0,t)\times\Omega$ yields
\begin{align*}
	\sup_{s \in (0,t)} \int_{\Omega} \bigl(u_x(s,x)-v_x(s,x)\bigr)^2 dx
	&\leq
	C \int_0^t \int_{\Omega} \bigl(|v_{xxx}|^2 + |v_{xxx}|^{2\alpha}\bigr) |u - v|^2 dx\, ds \\
	& \leq
	C \left(\int_0^t \int_{\Omega} \bigl(|v_{xxx}|^2 + |v_{xxx}|^{2\alpha}\bigr) dx\, ds\right) 
	||u - v||_{L_{\infty}((0,t)\times\Omega)}^2.
\end{align*}
In virtue of the Sobolev embedding $H^1(\Omega) \hookrightarrow L_{\infty}(\Omega)$ we eventually arrive at
\begin{align*}
	\sup_{s \in (0,t)} \int_{\Omega} &\bigl(u_x(s,x)-v_x(s,x)\bigr)^2 dx \\
	&\leq
	C \left(\int_0^t \int_{\Omega} \bigl(|v_{xxx}|^2 + |v_{xxx}|^{2\alpha}\bigr) dx\, ds\right) 
	\sup_{s \in (0,t)} \int_{\Omega} \bigl(u_x(s,x)-v_x(s,x)\bigr)^2 dx.
\end{align*}
Choosing $t$ small enough we obtain $u_x \equiv v_x$ and hence the assertion.
\end{proof}

\medskip

We now prove that flat films are the only possible steady state solutions of \eqref{eq:thin-film}.

\medskip

\begin{corollary}
Let $u$ be a solution of \eqref{eq:thin-film} in the sense of Theorem \ref{th:existence_Sobolev}
or Theorem \ref{th:existence_Holder} and assume that $u_t = 0$ for $t=0$. Then $u$ is constant in space,
$u = u_{\ast} \in \R_{>0}$.
\end{corollary}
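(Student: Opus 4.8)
The plan is to show that the stationarity hypothesis forces the initial film height $u_0$ to be spatially constant, and then to invoke the uniqueness result Theorem~\ref{th:uniqueness} to conclude that $u$ equals this constant on all of $[0,T]$. Throughout I read the hypothesis ``$u_t = 0$ at $t=0$'' as $u_t(t) \to 0$ in $E_0$ as $t \to 0^+$; this is literally meaningful e.g.\ when $u_0 \in C^4_B(\bar\Omega)$, in which case $u \in C^1([0,T];E_0)$ by Theorem~\ref{th:existence_Holder}.

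First I would use the regularity $u \in C([0,T];E_\rho)$ from Theorem~\ref{th:existence_Sobolev} (resp.\ Theorem~\ref{th:existence_Holder}) together with the embedding $E_\rho \hookrightarrow C^3(\bar\Omega)$, which holds since $4\rho > 3 + 1/p$ (resp.\ $4\rho > 3$): this gives $u(t) \to u_0$ in $C^3(\bar\Omega)$ as $t \to 0^+$. For $t > 0$ the equation in divergence form reads $u_t(t) = -a\,\partial_x J(u(t))$ in $E_0$, where $J(w) := w^3\bigl(1 + |b\,w\,w_{xxx}|^{\alpha-1}\bigr)w_{xxx}$ is the flux; testing against $\phi \in C_c^\infty(\Omega)$ gives $\int_\Omega u_t(t)\,\phi\,dx = a\int_\Omega J(u(t))\,\phi_x\,dx$. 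Since $w \mapsto J(w)$ is continuous from $C^3(\bar\Omega)$ into $C(\bar\Omega)$, letting $t \to 0^+$ yields $0 = a\int_\Omega J(u_0)\,\phi_x\,dx$ for all such $\phi$, i.e.\ $\partial_x J(u_0) = 0$ in $\mathcal D'(\Omega)$, so $J(u_0)$ is constant on the interval $\Omega$. By continuity of $J(u_0)$ and the boundary condition $(u_0)_{xxx} = 0$ on $\partial\Omega$ (encoded in the definition of $E_\rho$) this constant vanishes, i.e.\ $J(u_0) \equiv 0$. As $u_0 > 0$ on $\bar\Omega$ and $1 + |b\,u_0\,(u_0)_{xxx}|^{\alpha-1} \ge 1$, it follows that $(u_0)_{xxx} \equiv 0$; hence $u_0$ is a polynomial of degree at most two, and the remaining boundary condition $(u_0)_x = 0$ at the two endpoints of $\Omega$ forces $u_0$ to be a constant $u_* > 0$. (Alternatively one could argue via the energy identity \eqref{eq:energy_eq} and Proposition~\ref{pr:dual_pairing_differentiable}: $\tfrac{d}{dt}E(u(t))|_{t=0^+}$ equals both $-\int_\Omega (u_0)_{xx}\,u_t(0)\,dx = 0$ and minus the nonnegative dissipation evaluated at $t=0$, which again gives $(u_0)_{xxx} \equiv 0$.)

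It remains to observe that the constant function $(t,x) \mapsto u_*$ is smooth, satisfies the Neumann boundary conditions, solves \eqref{eq:thin-film} (all spatial derivatives, the flux, and $u_t$ vanish), takes the initial value $u_0 = u_*$, and lies in the solution class of Theorem~\ref{th:existence_Sobolev} (resp.\ Theorem~\ref{th:existence_Holder}). Applying Theorem~\ref{th:uniqueness} to $u$ and this constant solution, both positive and emanating from $u_0$, gives $u \equiv u_*$ on $[0,T]$, which is the assertion. I expect the only delicate point to be the first step, namely justifying the passage $t \to 0^+$ that converts the hypothesis on $u_t(0)$ into the pointwise stationarity $\partial_x J(u_0) = 0$; this hinges on the continuity $u \in C([0,T];E_\rho)$, the embedding $E_\rho \hookrightarrow C^3(\bar\Omega)$, and the continuity of the Nemytskii map $J$. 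Once $(u_0)_{xxx} \equiv 0$ is in hand, everything else is elementary.
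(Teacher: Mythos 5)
Your proof is correct, and it reaches the crucial intermediate conclusion $(u_0)_{xxx}\equiv 0$ by a genuinely different route from the paper. The paper's argument is purely variational: by Proposition~\ref{pr:dual_pairing_differentiable} one computes $\frac{d}{dt}E(u(t))\big|_{t=0^+}=-\int_\Omega u_{xx}(0)u_t(0)\,dx=0$ and simultaneously identifies $\frac{d}{dt}E$ with minus the nonnegative dissipation $a\int_\Omega u^3|u_{xxx}|^2+b^{\alpha-1}u^{\alpha+2}|u_{xxx}|^{\alpha+1}\,dx$; vanishing of the dissipation together with $u_0>0$ gives $(u_0)_{xxx}\equiv 0$. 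You instead test the equation in divergence form against $\phi\in C_c^\infty(\Omega)$, pass to the limit $t\to 0^+$ using $u\in C([0,T];E_\rho)$ and $E_\rho\hookrightarrow C^3(\bar\Omega)$ plus continuity of the Nemytskii map $J$, and conclude $\partial_x J(u_0)=0$ distributionally, then kill the constant value of $J(u_0)$ via the third-order boundary condition. Both routes are valid; yours uses the distributional flux balance and boundary condition where the paper uses the energy/dissipation identity and the sign of the integrand. You do note the energy alternative in passing, and that alternative is essentially the paper's proof verbatim. Two features of your writeup are in fact improvements in rigor over the paper: you state explicitly how to interpret ``$u_t=0$ at $t=0$'' (as a one-sided limit in $E_0$, or literally when $u_0\in E_1$), which matters since the generic solution class only gives $u\in C^1((0,T];E_0)$; and you explicitly close the argument by showing $u_0\equiv u_*$ produces a constant-in-time solution and invoking Theorem~\ref{th:uniqueness} to identify $u$ with it on $[0,T]$, a step the paper leaves implicit. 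Everything else, $(u_0)_{xxx}\equiv 0\Rightarrow u_0$ quadratic and the Neumann conditions $(u_0)_x(\pm l)=0$ forcing $u_0$ constant, matches the paper.
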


\medskip

\begin{proof}
The energy inequality \eqref{eq:energy_eq} in connection with Proposition \ref{pr:dual_pairing_differentiable}
implies that
\begin{equation*}
	0 = \frac{d}{dt} E(u_{\ast}) 
	= 
	- a \int_{\Omega} u^3 |u_{xxx}|^2  + b^{\alpha-1} u^{\alpha+2} |u_{xxx}|^{\alpha+1} dx,
\end{equation*}
where the right-hand side is non-positive. Therefore the positivity of $u_{\ast}$ implies
that $(u_{\ast})_{xxx} = 0$. Thus $(u_{\ast})_{xx}$ is constant which in turn implies that $(u_{\ast})_x$ is linear. Together with the Neumann
boundary condition we obtain that $(u_{\ast})_x \equiv 0$ and hence finally that $u_{\ast} \equiv \text{const}$.
\end{proof}

\medskip


\section{Maximal time of existence}\label{sec:max_existence_time}

In this section we characterise the maximal time of existence of solutions to the non-Newtonian
thin-film equation, obtained by Theorem \ref{th:existence_Sobolev}, respectively Theorem
\ref{th:existence_Holder}. For convenience we consider only solutions in fractional 
Sobolev spaces, i.e. solutions in the sense of Theorem \ref{th:existence_Sobolev}. 
The argument may easily be adapted to the case of little H\"older functions.

\medskip

In order to state the precise result we use the same notation  
\begin{equation*}
	p \in (1,\infty), 
	\quad
	1/p < s < r < 1, 
	\quad
	\sigma = \frac{3+s}{4} 
	\quad
	\text{and}
	\quad 
	\rho = \frac{3+r}{4}
\end{equation*}
as in Theorem \ref{th:existence_Sobolev}.
Then we define for $u_0 \in W^{4\rho}_{p,B}(\Omega)$ with $u_0(x) > 0$ for all $x \in \bar{\Omega}$
\begin{equation} \label{eq:max_ex_time}
	\bar{T} = \sup\{T > 0; \text{ there exists a solution $u$ of \eqref{eq:thin-film} in the sense 
	of Theorem \ref{th:existence_Sobolev}}\}.
\end{equation}
It follows from the uniqueness result Theorem \ref{th:uniqueness} that there exists a solution
\begin{equation*}
	u \in C([0,\bar{T});W^{4\rho}_{p,B}(\Omega)) \cap C((0,\bar{T});W^4_{p,B}(\Omega))
	\cap C^1((0,\bar{T});L_p(\Omega)).
\end{equation*}

\medskip

We prove that solutions with a finite lifetime $\bar{T} < \infty$ do either converge
to zero for some point in $\bar{\Omega}$ or they blow up in every 
$W^{4\gamma}_{p,B}(\Omega)$-norm, where $\gamma \in (\sigma,1]$.

\medskip

\begin{theorem}
Suppose that $\bar{T} < \infty$. Then 
\begin{equation} \label{eq:blow-up}
	\liminf_{t \nearrow \bar{T}} 
	\frac{1}{\min_{\bar{\Omega}} u(t)} 
	+ 
	||u(t)||_{W^{4\gamma}_{p,B}(\Omega)}
	=
	\infty
\end{equation}
for all $\gamma \in (\sigma,1]$.
\end{theorem}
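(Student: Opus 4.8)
The plan is the usual continuation argument, following the proof of the abstract Theorem~\ref{th:max_ex_time} but paying attention both to the positivity of the film height and to the $\eps$-regularisation used to construct solutions of \eqref{eq:thin-film}. Suppose, for contradiction, that for some $\gamma \in (\sigma,1]$ the $\liminf$ in \eqref{eq:blow-up} is finite. Then there exist a constant $M > 0$ and a sequence $t_n \nearrow \bar{T}$ with
\[
	\frac{1}{\min_{\bar{\Omega}} u(t_n)} + \|u(t_n)\|_{W^{4\gamma}_{p,B}(\Omega)} \leq M
	\qquad
	\text{for all } n \in \N ,
\]
so that $u(t_n)$ lies in the ball of radius $R := M$ in $W^{4\gamma}_{p,B}(\Omega)$ and at the same time $u(t_n,x) \geq \delta := 1/M > 0$ for all $x \in \bar{\Omega}$, uniformly in $n$. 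Note also that $u(t_n) \in W^4_{p,B}(\Omega)$ for every $n$, since $t_n > 0$ and $u \in C((0,\bar{T});W^4_{p,B}(\Omega))$.

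The crucial step is to obtain a \emph{uniform} existence time for the Cauchy problem \eqref{eq:thin-film} started from any of the data $u(t_n)$. Fix $\eps > 0$ so small that $2(\eps/(2a))^{1/3} \leq \delta$. Arguing as in the proof of Theorem~\ref{th:existence_Sobolev}, but applying Theorem~\ref{th:existence_quasilinear_abstract} with $\alpha$ replaced by $\gamma$ and $R'$ replaced by $R$, and then using the positivity estimate, one obtains a time $T_0 = T_0(\gamma,R,\delta) > 0$, \emph{independent of $n$}, such that for every $n$ the regularised problem \eqref{eq:CP_Sobolev_global} has a solution $U_n$ on $[0,T_0]$ with $U_n(0) = u(t_n)$ which stays positive on $[0,T_0]\times\bar{\Omega}$ and therefore solves \eqref{eq:thin-film} there. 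Since $U_n(0) = u(t_n) \in W^4_{p,B}(\Omega) = E_1$, the H\"older continuity in time of $\Acal\circ U_n$ and $\Fcal\circ U_n$ (Lemma~\ref{lem:A_F_Holder}) together with part~(ii) of Theorem~\ref{thm:existence_linear_Amann} bootstraps this to $U_n \in C([0,T_0];W^4_{p,B}(\Omega)) \cap C^1([0,T_0];L_p(\Omega))$, exactly as in the proof of Theorem~\ref{th:max_ex_time}; in particular $U_n$ is a solution of \eqref{eq:thin-film} in the sense of Theorem~\ref{th:existence_Sobolev} on $[0,T_0]$.

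Now fix $n$ so large that $\bar{T} - t_n < T_0/2$. By the uniqueness result Theorem~\ref{th:uniqueness}, $U_n(t-t_n) = u(t)$ for all $t \in [t_n,\bar{T})$, since both are positive solutions of \eqref{eq:thin-film} on that interval with the same value at $t = t_n$. Hence
\[
	\tilde{u}(t) :=
	\begin{cases}
		u(t), & 0 \leq t < t_n, \\
		U_n(t-t_n), & t_n \leq t \leq t_n + T_0,
	\end{cases}
\]
is well defined, and --- arguing as in the proof of Theorem~\ref{th:max_ex_time}, where the one-sided derivatives at the junction point coincide in $L_p(\Omega)$ by continuity of $\Acal$ and $\Fcal$ --- $\tilde{u}$ is a solution of \eqref{eq:thin-film} in the sense of Theorem~\ref{th:existence_Sobolev} on $[0,t_n+T_0]$. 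But $t_n + T_0 > \bar{T} - T_0/2 + T_0 = \bar{T} + T_0/2 > \bar{T}$, which contradicts the definition \eqref{eq:max_ex_time} of $\bar{T}$. Consequently the $\liminf$ in \eqref{eq:blow-up} is infinite.

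The main obstacle is the second step, i.e.\ producing a lower bound $T_0 > 0$ on the existence time that is uniform over the data $u(t_n)$. The time furnished by Theorem~\ref{th:existence_quasilinear_abstract} for the $\eps$-regularised problem \eqref{eq:CP_Sobolev_global} depends only on $R$ and on the fixed parameter $\eps$; the additional ingredient is that a solution of \eqref{eq:CP_Sobolev_global} solves the genuine equation \eqref{eq:thin-film} only as long as it remains positive, and the proof of Theorem~\ref{th:existence_Sobolev} shows that this positivity-maintenance time is bounded below in terms of the lower bound $\delta$ on the initial film height and of $R$. Thus $T_0$ can be taken to depend only on $\gamma$, $R$ and $\delta$, and this is precisely the reason why both terms $1/\min_{\bar{\Omega}} u(t)$ and $\|u(t)\|_{W^{4\gamma}_{p,B}(\Omega)}$ must enter the blow-up criterion \eqref{eq:blow-up}.
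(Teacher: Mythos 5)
Your proof is correct and follows essentially the same continuation argument as the paper: negate the blow-up criterion to extract a sequence $t_n \nearrow \bar{T}$ along which the norm is bounded and the film height is bounded away from zero, invoke the local existence theory to get a uniform restart time $T_0$, glue, and contradict the definition of $\bar{T}$. You are slightly more explicit than the paper about \emph{why} the restart time is uniform — you unpack Theorem~\ref{th:existence_Sobolev} down to the $\eps$-regularisation and the positivity-maintenance estimate, whereas the paper simply cites Theorem~\ref{th:existence_Sobolev} with $T = T(r,R_\gamma)$ — and you add an invocation of the uniqueness theorem to identify $U_n(t-t_n)$ with $u(t)$ on the overlap $[t_n,\bar{T})$. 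That identification is a reasonable sanity check but is not actually required: the gluing needs only that $U_n(0)=u(t_n)$ and that the one-sided time derivatives match at $t=t_n$ in $E_0$, which you establish (as the paper does) via the argument from Theorem~\ref{th:max_ex_time}.
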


\medskip

\begin{proof}
This follows from the standard continuation argument. Fix $\gamma \in (\sigma,\rho]$.
Let $\bar{T} < \infty$ and assume by contradiction that \eqref{eq:blow-up} is false. 
Then there exist positive constants $r, R_{\gamma} > 0$ and a sequence $(t_n)_n$ 
with $\lim_{n\to \infty} t_n = \bar{T}$ such that
\begin{equation*}
	\min_{\bar{\Omega}}\, u(t_n) \geq r
	\quad
	\text{and}
	\quad
	||u(t_n)||_{W^{4\gamma}_{p,B}(\Omega)} \leq R_{\gamma},
	\quad
	n \in N.
\end{equation*}
By Theorem \ref{th:existence_Sobolev} 
there exists a positive time $T=T(r,R_{\gamma}) > 0$ (independent of $n$) such that for all $n \in N$ 
there exists a solution 
\begin{equation*}
	U_n \in C([0,T];W^{4\gamma}_{p,B}(\Omega)) 
	\cap 
	C^{\frac{\gamma-\sigma}{2}}([0,T];W^{4\sigma}_{p,B}(\Omega))
	\cap
	C((0,T];W^4_{p,B}(\Omega))
\end{equation*}
with initial value $U_n(0) = u(t_n)$. Here we used Theorem \ref{th:existence_Sobolev}
with $\rho = \gamma$ and $\rho' = \frac{\sigma + \rho}{2}$, whence 
\begin{equation*}
	\Acal \circ U_n \in C^{\nu}([0,T];\Hcal(W^4_{p,B}(\Omega);L_p(\Omega)))
	\quad
	\text{and}
	\quad
	\Fcal \circ U_n \in C^{\nu}([0,T];L_p(\Omega))
\end{equation*}
with $\nu = \mu \frac{\gamma - \sigma}{2}$. Moreover, $U_n(0) = u(t_n) \in W^4_{p,B}(\Omega)$
and hence by the linear theory 
$U_n \in C^1([0,T];L_p(\Omega)) \cap C([0,T];W^4_{p,B}(\Omega))$. 
Now define for $t_n \geq \bar{T} - \frac{T}{2}$
\begin{equation*}
	\tilde{u}(t)
	=
	\begin{cases}
		u(t), & 0 \leq t < t_n
		\\
		U_n(t - t_n), \quad & t_n \leq t \leq t_n + T.
	\end{cases}
\end{equation*}
As in the proof of Theorem \ref{th:max_ex_time} one can now show that $\tilde{u}$ is a solution of
$\dot{\tilde{u}} + \Acal(\tilde{u})\tilde{u} = \Fcal(\tilde{u})$ on $(0,t_n + T)$ that enjoys the regularity
\begin{equation*}
	\tilde{u} \in C([0,t_n + T];W^{4\rho}_{p,B}(\Omega)) \cap C((0,t_n + T];W^4_{p,B}(\Omega)).
\end{equation*}
Since $t_n + T \geq \bar{T} + \frac{T}{2} > \bar{T}$ this is a contradiction to the definition of 
$\bar{T}$.
\end{proof}

\medskip

Analogously one shows that for solutions in the sense of Theorem \ref{th:existence_Holder} 
the maximal time $\bar{T}$ of existence is characterised by $\bar{T} = \infty$ or
\begin{equation} \label{eq:blow-up}
	\liminf_{t \nearrow \bar{T}} 
	\frac{1}{\min_{\bar{\Omega}} u(t)} 
	+ 
	||u(t)||_{h^{4\gamma}_B(\bar{\Omega})}
	=
	\infty
\end{equation}
for all $\gamma \in (\sigma,1]$.

\medskip


\vspace{1cm}
\textsc{Acknowledgement.} 
The first author is grateful to the anonymous reviewer whose comments have improved the original version
of the manuscript and to Joachim Escher and Lorenzo Giacomelli for fruitful discussions on the topic.
This work was supported by the SFB 1060.

\medskip



\bibliographystyle{abbrv}
\bibliography{onsager}

\end{document}